\numberwithin{equation}{section}
\newcommand{\bfI}{\boldsymbol{I}}
\newcommand{\tn}{|||}
\newcommand{\mcK}{\mathcal{K}}
\newcommand{\mcN}{\mathcal{N}}
\newcommand{\mcF}{\mathcal{F}}
\newcommand{\IR}{\mathbb{R}}
\newtheorem{thm}{Theorem}[section]
\newtheorem{rem}{Remark}[section]
\newenvironment{proof}{\noindent \newline {\bf Proof.}}
{\hfill \mbox{\fbox{} } \newline}
\title{\bf A Cut Finite Element Method with Boundary 
Value Correction}
\date{\today}
\author{Erik Burman,\mbox{ } Peter Hansbo,\mbox{ } Mats G. Larson}
\begin{document}

\maketitle

\begin{abstract} In this contribution we develop a cut finite element 
method with boundary value correction of the type originally proposed 
by Bramble, Dupont, and Thom\'ee in \cite{BrDuTh72}. The cut finite 
element method is a fictitious domain method with Nitsche type 
enforcement of Dirichlet conditions together with stabilization of 
the elements at the boundary which is stable and enjoy optimal order approximation properties. A computational difficulty is, however, the geometric computations related to quadrature on the cut elements which 
must be accurate enough to achieve higher order approximation. With boundary value correction we may use only a piecewise linear 
approximation of the boundary, which is very convenient in a cut finite element method, and still obtain optimal order convergence. The boundary 
value correction is a modified Nitsche formulation involving a Taylor expansion in the normal direction compensating for the approximation of the boundary. Key to the analysis is a consistent stabilization term 
which enables us to prove stability of the method and a priori 
error estimates with explicit dependence on the meshsize and distance 
between the exact and approximate boundary.
\end{abstract}

\section{Introduction}

We consider a cut finite element method (CutFEM) for a second order 
elliptic boundary value problem with Dirichlet conditions. In standard 
fictitious domain CutFEM the boundary is represented on a background 
grid and allowed to cut through the elements in an arbitrary fashion. 
The Dirichlet conditions are enforced weakly using Nitsche's method
\cite{Nit70}.  
We refer to \cite{BeBuHa09}, \cite{Bu10}, \cite{BuHa12}, 
\cite{MasLarLog14}, \cite{JoLa13}, for recent developments of this approach. See also the recent overview paper \cite{BurClaHan15} and 
\cite{MasLarLog13} for implementation issues.

Cut finite element methods is one way of alleviating the problem of mesh generation and
allowing for more structured meshes and associated solvers. For this reason, the interest for such methods has increased significantly
during the last few years; among recent contributions we mention the finite cell method of Parvizian, D\"uster, et al. \cite{PaDuRa07,DuPaYaRa08};
the least squares stabilized Lagrange multiplier methods of Haslinger
and Renard \cite{HaRe09}, Tur et al. \cite{TuAlNaRo14}, and Baiges et al. \cite{BaCoHeShWa12}; the stabilization of
Nitsche's method by Codina and Baiges \cite{CoBa09};
the local projection stabilization of multipliers of Barrenechea and Chouly \cite{BaCh12} and of Amdouni, Moakher, and Renard \cite{AmMoRe14}.

In this contribution we develop a version of CutFEM based on the 
idea of boundary value correction originally proposed for standard
finite element methods on an approximate domain in \cite{BrDuTh72} and
further developed in \cite{Du74}. Using the closest point mapping to
the exact boundary, or an approximation thereof, the boundary condition on the exact boundary may be weakly enforced using Nitsche's method on the boundary of the approximate 
domain. A Taylor expansion is used to approximate the value of the solution on the exact boundary in terms of the value and normal derivatives at the discrete approximate boundary. Key to the stability 
of the method is a consistent stabilization term that, also 
in the case of arbitrary cut elements at the boundary, provide control 
of the variation of the function in the vicinity of the boundary. More precisely, the stabilization ensures that the inverse inequality necessary to prove coercivity holds and that the resulting linear system of 
equations has the optimal condition number $O(h^{-2})$, where $h$ is the 
mesh parameter, independent of the position of the boundary on the background grid.

We prove optimal order a priori error estimates, in the energy and $L^2$ norms, in terms of the error in the boundary approximation and the meshsize. Of particular practical importance is the fact that  we 
may use a piecewise linear approximation of the boundary, which is 
very convenient from a computational point of view since the geometric computations are simple in this case and a piecewise linear distance function may be used to construct the 
discrete domain. We obtain optimal order convergence for higher order polynomial approximation of the solution if the Taylor expansion has sufficiently high order. In particular, for second and third order polynomials we obtain optimal order error estimates in the energy and 
$L^2$ norms with only one term in the Taylor expansion. Note that 
without boundary correction one typically requires $O(h^{p+1})$ accuracy 
in the $L^\infty$ norm for the approximation of the domain which leads 
to significantly more involved computations on the cut elements for 
higher order elements, see \cite{JoLa13}. However, also in the case of no boundary value correction our analysis in fact provides optimal order error estimates if the approximation of the boundary is accurate enough and thus we obtain 
an analysis for the standard cut finite element method with approximate boundary. Finally, we also prove estimates for the error both on the discrete domain and 
on the exact domain. The discrete solution on the exact domain is directly 
defined by the method since we may include all elements that intersect 
the union of the discrete and exact domains in the active mesh. Even 
though some active elements may not intersect the discrete domain the resulting method is stable due to the stabilization term and no 
auxiliary extension of the discrete solution outside of the discrete domain is necessary. We present numerical results illustrating our theoretical findings. 

The outline of the paper is as follows: In Section 2 
we formulate the model problem and our method, in Section 3 we present 
our theoretical analysis, and in Section 4 we present the numerical results.

\section{Model Problem and Method}

\subsection{The Domain}

Let $\Omega$ be a domain in $\mathbb{R}^d$ with smooth boundary 
$\partial \Omega$ and exterior unit normal $n$. We let $\rho$ 
be the signed distance function, negative on the inside and 
positive on the outside, to $\partial \Omega$ and we let 
$U_\delta(\partial \Omega)$ 
be the tubular neighborhood $\{x\in \IR^d : |\rho(x)| < \delta\}$ 
of $\partial \Omega$. Then there is a constant $\delta_0>0$ such 
that the closest point mapping $p(x):U_{\delta_0}(\partial \Omega) 
\rightarrow \partial \Omega$ is well defined and we have the 
identity $p(x) = x - \rho(x)n(p(x))$. We assume that $\delta_0$ is
chosen small enough that $p(x)$ is a bijection. See \cite{GilTru01}, 
Section 14.6 for further details on distance functions.

\subsection{The Model Problem}
We consider the problem: find $u:\Omega \rightarrow \IR$ 
such that
\begin{alignat}{2}\label{eq:poissoninterior}
-\Delta u &= f \qquad 
&& \text{in $\Omega$}
\\ \label{eq:poissonbc}
u &= g \qquad && \text{on $\partial\Omega$}
\end{alignat}
where $f\in H^{-1}(\Omega)$ and $g\in H^{1/2}(\partial \Omega)$ 
are given data. It follows from the Lax-Milgram Lemma that there 
exists a unique solution to this problem and we also have the 
elliptic regularity estimate
\begin{equation}\label{eq:ellipticregularity}
\|u\|_{H^{s+2}(\Omega)} \lesssim \|f\|_{H^s(\Omega)}, \qquad 
s \geq -1
\end{equation} 
Here and below we use the notation $\lesssim$ to denote less or 
equal up to a constant.

\subsection{The Mesh, Discrete Domains, and Finite Element Spaces}
\begin{itemize}
\item Let $\Omega_0$ be a convex polygonal domain such 
that $U_{\delta_0}(\Omega) \subset \IR^d$, where 
$U_{\delta}(\Omega) = U_{\delta}(\partial \Omega) \cup 
\Omega$. Let $\mcK_{0,h}, h \in (0,h_0]$, 
be a family of quasiuniform partitions, with 
mesh parameter $h$, of $\Omega_0$ into shape 
regular triangles or tetrahedra $K$. We refer 
to $\mcK_{h,0}$ as the background mesh.

\item Let $\Omega_h$, $h \in (0,h_0]$, be a 
family of polygonal domains approximating $\Omega$. To each $
\Omega_h$ we associate the functions $\nu_h:\partial \Omega_h \rightarrow
\mathbb{R}^d$, $|\nu_h|=1$, and $\varrho_h:\partial \Omega_h \rightarrow \mathbb{R}$, such that if $p_h(x,\varsigma):=x + \varsigma \nu_h(x)$ then
$p_h(x,\varrho_h(x)) \in \partial \Omega$ for all $x \in \partial \Omega_h$. We will also assume that $p_h(x,\varsigma)
\in U_{\delta_0}(\Omega)$ for all $x \in \partial \Omega_h$ and all
$\varsigma$ between $0$ and $\varrho_h(x)$. For conciseness we will 
drop the second argument of $p_h$ below whenever it takes the value $\varrho_h(x)$. We assume that the following assumptions are satisfied

\begin{equation}\label{eq:geomassum-a}
\delta_h := \| \varrho_h \|_{L^\infty(\partial \Omega_h)} = o(h), 
\qquad  h \in (0,h_0]
\end{equation}
and 
\begin{equation}\label{eq:geomassum-c}
\| \nu_h - n\circ p \|_{L^\infty(\partial \Omega_h)} = o(1), 
\qquad  h \in (0,h_0]
\end{equation}
where $o(\cdot)$ denotes the little ordo. We also assume 
that $h_0$ is small enough to guarantee that 
\begin{equation}\label{eq:geomassum-b}
\partial \Omega_h 
\subset U_{\delta_0}(\partial \Omega), \qquad h\in(0,h_0]
\end{equation}
and that  there exists $M>0$ such for any $y \in U_{\delta_0}(\partial
\Omega)$ the equation, find $x \in \partial \Omega_h$ and $
|\varsigma| \leq \delta_h$ such that
\begin{equation}\label{eq:assump_olap}
p_h(x,\varsigma) = y
\end{equation}
has a solution set $\mathcal{P}_h$ with
\begin{equation}\label{eq:card_hyp}
\mbox{card}(\mathcal{P}_h) \leq M
\end{equation}
uniformly in $h$. The rationale of this assumption is to ensure that
even if $p_h$ is not a bijection its image can not degenerate for
vanishing $h$.

\paragraph{Choice of $\boldsymbol \nu_h$.}
During computation, typically the quantities that are
easily accessible on $\partial \Omega_h$ are $n_h$ and $\rho$. 
The two choices that are natural for $\nu_h, \varrho_h$ are therefore
$\nu_h:=n_h$, $\varrho_h:=\varsigma$, with $\varsigma$ solution to
$\rho(p_h(x,\varsigma))=0$
or $\nu_h:= n \circ p$ and $\varrho_h:= \rho$. Both cases requires the
solution of nonlinear equations. The computation of $\varrho_h$ using
Newton's method in the first case is substantially less costly than
that of $n \circ p$, since the first quantity is a scalar and the
initial guess $\rho$ is more accurate.

Observe that if $\nu_h := n \circ p$ then the mapping $p_h$ coincides
with $p(x)$. It is therefore a bijection and all the above assumptions hold by
the properties of the closest point mapping. This bijection property does not hold in the general case. However, we assume that the equation $\rho(p_h(x,\varsigma)) = 0$ has at least one
solution for every $x \in \partial \Omega_h$ and $\varrho_h$ may then
be identified with the solution of smallest magnitude.
As an example consider the practically important 
case where $\partial \Omega_h$ is defined by the zero level set of a piecewise linear nodal interpolant of the distance function and we 
choose $\nu_h := n_h$,
with $n_h$ denoting the normal of $\partial \Omega_h$. That the
associated 
$\varrho_h$ exists for all $x \in \partial \Omega_h$ follows
immediately from the implicit function theorem: the equation in
$\varsigma$, $\rho(x + \varsigma
n \circ p)=0$ has a solution since $p$ is a bijection and then so does
$\rho(x + 
\varsigma n_h)=0$ since $\nabla \rho \cdot n_h > 0$ for $h$ small
enough. The assumption (\ref{eq:card_hyp})
must clearly hold in this case, since if it does not then also $p$ must have a
critical point in $U_{\delta_0}(\partial
\Omega)$ (since $p_h \rightarrow p$ and the number of solutions
is bounded below as $h \rightarrow 0$), but this contradicts the fact 
that $p$ is a bijection. Moreover we have the estimates
\begin{equation}
\delta_h \lesssim h^2, \qquad \|\nu_h - n \circ p \|_{L^\infty(\partial \Omega_h)} \lesssim h 
\end{equation}

\item Given a subset $\omega$ of $\Omega_0$, let 
$\mcK_h(\omega)$ be the submesh defined by
\begin{equation}
\mcK_{h}(\omega) = \{K \in \mcK_{0,h} : \overline{K} \cap 
\overline{\omega} 
\neq \emptyset \}
\end{equation}
i.e., the submesh consisting of elements that intersect 
$\overline{\omega}$, and let 
\begin{equation}
\mcN_{h}(\omega) = \cup_{K \in \mcK_{h}(\omega)} K
\end{equation}
be the union of all elements in $\mcK_h(\omega)$. Below the $L^2$-norm 
of discrete functions frequently should be interpreted as the broken norm. For example for norms over $\mcN_{h}$ we have
\begin{equation}
\|v\|_{\mcN_{h}(\omega)}^2 := \sum_{K \in \mcK_{h}(\omega)} \|v\|_K^2
\end{equation}

\item Let the active mesh $\mcK_h$ be defined by
\begin{equation}
\mcK_{h} := \mcK_h(\Omega \cup \Omega_h)
\end{equation}
i.e., the submesh consisting of elements that intersect 
$\Omega_h\cup\Omega$, and let 
\begin{equation}
\mcN_{h} := \mcN_h(\Omega\cup \Omega_h)
\end{equation}
be the union of all elements in $\mcK_h$.
 \item Let $V_{0,h}$ be the space of piecewise continuous 
polynomials of order $p$ defined on $\mcK_{0,h}$ and let the 
finite element space be defined by
\begin{equation}
V_{h} := V_{0,h}|_{\mcN_{h}}
\end{equation}
\end{itemize}

\subsection{Extensions}
\label{sec:extension}
There is an extension operator 
$E:H^s(\Omega) \rightarrow H^s(U_{\delta_0}(\Omega))$ 
such that 
\begin{equation}\label{eq:extensionstability}
\| Ev \|_{H^s(U_\delta(\Omega))} \lesssim \| v \|_{H^s(\Omega)}, 
\qquad s\geq 0 
\end{equation}
see \cite{Fo95}. For brevity we shall use the notation 
$v$ for the extended function as well, i.e., $v=Ev$ on 
$U_{\delta_0}(\Omega)$.

\subsection{The Method} 

\paragraph{Derivation.} Let $f=Ef$ and $u=Eu$ be the 
extensions of $f$ and $u$ from $\Omega$ to 
$U_{\delta_0}(\Omega)$. For $v \in V_h$ we 
have using Green's formula
\begin{align}
(f,v)_{\Omega_h} 
&= (f+\Delta u,v)_{\Omega_h} - (\Delta u,v)_{\Omega_h} 
\\
&=(f +\Delta u ,v)_{\Omega_h \setminus \Omega} 
+ (\nabla u,\nabla v)_{\Omega_h} 
- (n_h\cdot\nabla u,v)_{\partial \Omega_h}
\end{align}
where we used the fact $f+\Delta u=0$ on $\Omega$, while 
on $\Omega_h \setminus \Omega$ we have 
$f+ \Delta u = Ef - \Delta Eu$, which is not in general equal 
to zero. Now the boundary condition $u=g$ on $\partial \Omega$ 
may be enforced weakly as follows
\begin{align}
(f,v)_{\Omega_h}&=(f +\Delta u ,v)_{\Omega_h} 
+ (\nabla u,\nabla v)_{\Omega_h} 
- (n_h\cdot\nabla u,v)_{\partial \Omega_h}
\\ \nonumber
&\qquad - (u\circ p_h - g\circ p_h,n_h\cdot \nabla v)_{\partial \Omega_h}
+ \beta h^{-1} (u\circ p_h - g\circ p_h, v)_{\partial \Omega_h}  
\end{align}
The positive constant $\beta$ must be chosen large enough
to ensure stability, cf. below.

Since we do not have access to $u\circ p_h$ we use a Taylor 
approximation in the direction $\nu_h$
\begin{equation}\label{def:Taylor}
u\circ p_h(x) \approx T_k(u)(x) 
:= \sum_{j=0}^k \frac{D_{\nu_h}^j v(x)}{j!}\varrho_h^j(x)
\end{equation}
where $D_{\nu_h}^j$ is the $j$:th partial derivative in 
the direction $\nu_h$.
Thus it follows that the solution to 
(\ref{eq:poissoninterior})-(\ref{eq:poissonbc}) satisfies 
\begin{align}\label{eq:derivation-c}
(f,v)_{\Omega_h}&=(f +\Delta u ,v)_{\Omega_h} + (\nabla u,\nabla v)_{\Omega_h} 
- (n_h\cdot\nabla u,v)_{\partial \Omega_h}
\\ \nonumber
&\qquad \qquad - (T_k(u) - g\circ p_h,n_h\cdot \nabla v)_{\partial \Omega_h}
+ \beta h^{-1} (T_k(u) - g\circ p_h, v)_{\partial \Omega_h} 
\\ \nonumber
&\qquad \qquad - (u\circ p_h - T_k(u),n_h\cdot \nabla v)_{\partial \Omega_h}
+ \beta h^{-1} (u \circ p_h - T_k(u), v)_{\partial \Omega_h} 
\end{align}
for all $v\in V_h$. Rearranging the terms we arrive at
\begin{align}
\nonumber
&(\nabla u,\nabla v)_{\Omega_h} 
-(n_h\cdot\nabla u,v)_{\partial \Omega_h}
\\ \nonumber
&\qquad \qquad 
- (T_k(u),n_h\cdot \nabla v)_{\partial \Omega_h}
+ \beta h^{-1} (T_k(u), v)_{\partial \Omega_h} 
\\ \nonumber
& \qquad \qquad +(f + \Delta u, v)_{\Omega_h \setminus \Omega}
\\ \nonumber
&\qquad \qquad - (u\circ p_h - T_k(u),n_h\cdot \nabla v)_{\partial \Omega_h}
+ \beta h^{-1} (u \circ p_h - T_k(u), v)_{\partial \Omega_h} 
\\ \label{eq:derivation-d}
&\qquad=(f,v)_{\Omega_h}  
- (g\circ p_h,n_h\cdot \nabla v)_{\partial \Omega_h}
+ \beta h^{-1}( g\circ p_h, v)_{\partial \Omega_h} 
\end{align}
for all $v\in V_h$. The discrete method is obtained from this
formulation by dropping the consistency terms of highest order,
i.e. those on lines three and four of (\ref{eq:derivation-d}).

\paragraph{Bilinear Forms.}
We define the forms
\begin{align}\label{eq:a0}
a_0(v,w) &:= (\nabla v,\nabla w)_{\Omega_h} 
\\ \nonumber
&\qquad - (n_h\cdot \nabla v,w)_{\partial \Omega_h} 
- (T_k(v),n_h\cdot \nabla w)_{\partial \Omega_h} 
\\ \nonumber
&\qquad + \beta h^{-1}(T_k(v),w)_{\partial \Omega_h} 
\\ \label{eq:ah}
a_h(v,w)& := a_0(v,w) + j_h(v,w)
\\ \label{eq:jh}
j_{h}(v,w) &:= \gamma_j\sum_{F \in \mcF_{h}} 
\sum_{l=1}^p h^{2l-1} ([D_{n_F}^l v],[D_{n_F}^l w])_F
\\ \label{eq:lh}
l_h(w) &:= (f,w)_{\Omega_h} 
- (g\circ p_h ,n_h\cdot \nabla w)_{\partial \Omega_h} 
+ \beta h^{-1}(g\circ p_h, w)_{\partial \Omega_h}                        
\end{align}
where $\gamma_j$ is a positive constant. Here we used the notation:
\begin{itemize}
\item $\mcF_{h}$ is the set of all internal faces to 
elements $K \in \mcK_{h}$ that intersect the set  
$\Omega \setminus \Omega_h \cup \partial \Omega_{h}$, and 
$n_F$ is a fixed unit normal to $F\in \mcF_h$.
\item $D_{n_F}^l$ is the partial derivative of order 
$l$ in the direction of the normal $n_F$ to the face 
$F\in \mcF_{h}$.
\item $[v]|_F = v^+_F - v^-_F$, with $v_F^{\pm} 
= \lim_{s \rightarrow 0^+} 
v(x \mp s n_F)$, is the jump of a discontinuous function 
$v$ across a face $F\in \mcF_{h}$.
\end{itemize}

\paragraph{The Method.} Find: $u_h \in V_h$ such that 
\begin{equation}\label{eq:fem}
a_h(u_h, v) = l_h(v),\qquad \forall v \in V_h
\end{equation}
where $a_h$ is defined in (\ref{eq:ah}) and $l_h$ in (\ref{eq:lh}).

\paragraph{Symmetric Formulation in the Case $\boldsymbol{k=1}$.} 
Using one term in the Taylor expansion gives the following 
forms 
\begin{align}
a_h(v,w) &=(\nabla v,\nabla w)_{\Omega_h} + j_h(v,w)\label{nonsym1}
\\ \nonumber 
&\qquad 
-(n_h \cdot\nabla v,w)_{\partial \Omega_h}
-(v,n_h \cdot\nabla w)_{\partial \Omega_h}
\\ \nonumber
&\qquad  
- (\varrho_h\, \nu_h\cdot \nabla v,n_h \cdot \nabla w)_{\partial \Omega_h}
\\ \nonumber
&\qquad + \beta h^{-1} (T_1(v), w)_{\partial \Omega_h} 
\\
l_h(w)&=(f,w)_{\Omega_h} 
- (g\circ p_h,n_h\cdot \nabla w)_{\partial \Omega_h}  
+ \beta h^{-1}(g\circ p_h, w)_{\partial \Omega_h}  \label{nonsym2}                      
\end{align}
We see that only the terms of the third and the fourth lines of
(\ref{nonsym1}) violate
the symmetry of the formulation. To make it symmetric we choose
$\nu_h:= n_h$, assuming that the discrete approximation $\Omega_h$ is
such that this is a valid choice and also symmetrize the
penalty term in the fourth line by replacing $w$ in the right hand
slot by $T_1(w)$. A similar perturbation is added to the right hand
side to keep consistency. The forms of the resulting symmetric formulation read
\begin{align}
a_h(v,w) &=(\nabla v,\nabla w)_{\Omega_h}+ j_h(v,w) \label{sym1}
\\ \nonumber 
&\qquad 
-(n_h \cdot\nabla v,w)_{\partial \Omega_h}
-(v,n_h \cdot\nabla w)_{\partial \Omega_h}
\\ \nonumber
&\qquad  
- (\varrho_h\, n_h\cdot \nabla v,n_h \cdot \nabla w)_{\partial \Omega_h}
\\ \nonumber
&\qquad + \beta h^{-1} (T_1(v), T_1(w))_{\partial \Omega_h} 
\\
l_h(w)&=(f,w)_{\Omega_h} 
- (g\circ p_h,n_h\cdot \nabla w)_{\partial \Omega_h}  
+ \beta h^{-1}(g\circ p_h, T_1(w))_{\partial \Omega_h}  \label{sym2}                      
\end{align}
The analysis presented below covers this important special case. Also
observe that if more terms are included in the Taylor development
the resulting nonsymmetric part of the matrix is expected to be small, relative to the symmetric part, and the reduced 
symmetric form is likely to be a good preconditioner. 
\section{A Priori Error Estimates}

\subsection{The Energy Norm}
Let the energy norm be defined by
\begin{align}
\tn v \tn_h^2 &= 
\|\nabla v\|^2_{\Omega_h} 
+  \tn v \tn_{j_{h}}^2
+ h\|n_h\cdot \nabla v \|^2_{\partial \Omega_h} 
+ h^{-1}\|v\|^2_{\partial \Omega_h}
\end{align}
where
\begin{equation}
\tn v \tn_{j_h}^2 = j_h(v,v)
\end{equation}

\subsection{Consistency}

In view of (\ref{eq:derivation-d}) we obtain the identity 
\begin{align}\label{eq:consistency-a}
a_h(u-u_h,v) &= (u\circ p_h - T_k(u),n_h\cdot \nabla v)_{\partial \Omega_h}
- \beta h^{-1} (u \circ p_h - T_k(u), v)_{\partial \Omega_h}
\\ \nonumber
&\qquad + (f + \Delta u, v)_{\Omega_h\setminus\Omega}, \qquad \forall v\in V_h
\end{align}
and thus we conclude that 
\begin{align}\label{eq:consistency-b}
|a_h(u-u_h,v)| &\leq \| u\circ p_h - T_k(u)\|_{\partial \Omega_h}
\Big( \|n_h\cdot \nabla v \|_{\partial \Omega_h} 
+  h^{-1} \beta \|v \|_{\partial \Omega_h} \Big)
\\ \nonumber
&\qquad + \|f+ \Delta u \|_{\Omega_h\setminus \Omega} \|v 
\|_{\Omega_h \setminus \Omega}
\\
&\leq h^{-1/2}\| u\circ p_h - T_k(u)\|_{\partial \Omega_h}
\tn v \tn_h 
\\ \nonumber
&\qquad + \|f+ \Delta u \|_{\Omega_h\setminus \Omega} \|v 
\|_{\Omega_h \setminus \Omega},
\qquad \forall v \in V_h
\end{align} 
\paragraph{Estimate of the Error in the Taylor Approximation.}
The Taylor polynomial $T_k(u)(x)$ provides an approximation 
of $u\circ p_h(x)$ and we have the error estimate 
\begin{align}\label{eq:consistency-c}
|v \circ p_h(x) - T_k(v)(x)| 
&\lesssim \left|\int_0^{\varrho_h(x)}  D^{k+1}_{\nu_h} v(x(s))(\varrho_h(x)-s)^{k} ds\right|
\\ \label{eq:consistency-d}
&\lesssim \|D^{k+1}_{\nu_h} v \|_{I_x} \| (\varrho_h(x) - s)^k \|_{I_x}
\\ \label{eq:consistency-e}
&\lesssim \|D^{k+1}_{\nu_h} v \|_{I_x} |\varrho_h(x)|^{k+1/2} 
\end{align}
where $I_x$ is the line segment between $x$ and $p_h(x)$. Combining 
(\ref{eq:consistency-b}) and (\ref{eq:consistency-e}) and recalling
the assumption (\ref{eq:card_hyp}) we arrive 
at the estimate 
\begin{align}
\| v \circ p - T_k(v)\|_{\partial \Omega_h}^2
&\lesssim 
\int_{\partial \Omega_h} 
\|D^{k+1}_{\nu_h}v \|^2_{I_x} |\varrho_h(x)|^{2k+1} dx 
\\
&\lesssim 
\int_{\partial \Omega_h} 
\|D^{k+1}_{\nu_h} v \|^2_{I_{\delta_h}} |\varrho_h(x)|^{2k+1} dx 
\\  \label{eq:consistency-f}
&\lesssim 
\delta_h^{2k+1}
\| D^{k+1} v\|^2_{U_{\delta_h}(\partial \Omega_h)} 
\end{align}
Here we handled the possible overlap of the contributions from
different polygonal sides of $\partial \Omega_h$ by using the fact
that by assumption (\ref{eq:card_hyp}) such an overlap must have a
finite number of contributions uniformly in $h$ and by dropping the
directional derivative, effectively including the derivatives of order
$k+1$ in all directions.

With slightly stronger control of the regularity we obtain the 
estimate 
\begin{align} \label{eq:consistency-g}
\| v \circ p - T_k(v)\|_{\partial \Omega_h}
\lesssim 
\delta_h^{k+1}
\sup_{0\leq t \leq \delta_0} \| D^{k+1} v\|_{L^2(\partial \Omega_t)}
\end{align}
where $\partial \Omega_t = \{x \in \Omega : \rho(x) = t \}$ is 
the levelset with distance $t$ to the boundary $\partial \Omega$.
\paragraph{Estimate of the Residual on {$\boldsymbol{\Omega_h \setminus \Omega}$}.} Suppose that 
\begin{equation}\label{eq:residualregularity}
f+ \Delta u \in H^l(U_{\delta_0}(\Omega))
\end{equation}
which, in view of (\ref{eq:ellipticregularity}) and 
(\ref{eq:extensionstability}), holds 
if $f\in H^{l}(\Omega)$. Using (\ref{eq:residualregularity}) 
and the fact that $f+ \Delta u = 0$ in $\Omega$, 
we obtain the estimate 
\begin{equation} \label{eq:consistency-h}
\|f + \Delta u \|_{\Omega_h \setminus \Omega} 
\lesssim 
\delta^l_h \|D_n^l(f + \Delta u)\|_{\Omega_h \setminus \Omega}
\lesssim 
\delta^{l+1/2}_h 
\sup_{0\leq t \leq \delta_0} 
\|D_n^l(f + \Delta u)\|_{\partial\Omega_t}
\end{equation}
where we used the fact that 
$\Omega_h \setminus \Omega \subset U_\delta(\partial \Omega)$, 
where $\delta \sim \delta_h$.

\paragraph{Estimates of the Consistency Error.}
Combining (\ref{eq:consistency-g}), 
(\ref{eq:consistency-h}), and (\ref{eq:consistency-hh}), 
we obtain the estimate
\begin{align}\label{eq:consistency-i}
|a_h(u-u_h,v)| &\leq 
\delta^{k+1}_h 
\sup_{0 \leq t \leq \delta_0} \| D^{k+1} u\|_{L^2(\partial \Omega_t)}
\Big(\|n_h\cdot \nabla v \|_{\partial \Omega_h} 
+ h^{-1}\beta \|v \|_{\partial \Omega_h} \Big)
\\ \nonumber
&\qquad + 
\delta^{l+1/2}_h \sup_{0 \leq t \leq \delta_0} 
\|D_n^l(f+ \Delta u) \|_{\partial\Omega_t} 
\| v \|_{\Omega_h\setminus \Omega},
 \qquad \forall v \in V_h
\end{align} 
This estimate will be used when we derive an $L^2$ estimate of 
the error while for the energy error estimate we continue the 
estimation using the bound
\begin{equation} \label{eq:consistency-hh}
\| v \|_{\Omega_h \setminus \Omega} 
\lesssim \delta_h^{1/2} \tn v \tn_h
\end{equation}
which leads to
\begin{align}\label{eq:consistency-ii}
|a_h(u-u_h,v)| &\leq 
\Big( h^{-1/2} \delta^{k+1}_h 
\sup_{0\leq t \leq \delta_0} \| D^{k+1} u\|_{L^2(\partial \Omega_t)}
\\ \nonumber
&\qquad \qquad + 
\delta^{l+1}_h \sup_{0\leq t \leq \delta_0} 
 \|D_n^l(f+ \Delta u) \|_{\partial\Omega_t} \Big)\tn v \tn_h, 
 \qquad \forall v \in V_h
\end{align} 

\begin{rem}
We may upper bound the right hand sides further using global trace
inequalities leading to
\begin{equation}\label{eq:trace_reg_bound_a}
\sup_{0\leq t \leq \delta_0} \| D^{k+1} u\|_{L^2(\partial
  \Omega_t)} \lesssim \|u\|_{H^{k+2}(\Omega)} \lesssim \|f \|_{H^k(\Omega)}
\end{equation}
and
\begin{equation}\label{eq:trace_reg_bound_b}
\sup_{0\leq t \leq \delta_0} 
 \|D_n^l(f+ \Delta u) \|_{\partial\Omega_t} \lesssim
 \|f\|_{H^{l+1}(\Omega)}+  \|\Delta u \|_{H^{l+1}(\Omega)}
 \lesssim \|f\|_{H^{l+1}(\Omega)}
\end{equation}
\end{rem}

\subsection{Inverse Inequality}

Using the additional stability provided by the stabilization term 
$j_h$ we have the following inverse inequalities  
\begin{equation}\label{eq:inversec}
\|\nabla v \|^2_{\mcN_h} 
\lesssim 
\|\nabla v \|^2_{\Omega_h} + \tn v \tn_{j_h}^2, \qquad \forall v \in V_h
\end{equation} 
and
\begin{equation}\label{eq:inverseL2}
\| v \|^2_{\mcN_h} 
\lesssim 
\| v \|^2_{\Omega_h} + h^2 \tn v \tn_{j_h}^2, \qquad \forall v \in V_h
\end{equation} 
See \cite{MasLarLog14} for a proof.

\subsection{Coercivity and Continuity} 
 
We have coercivity
\begin{equation}\label{eq:coercivity}
\tn v \tn^2_h \lesssim a_h(v,v),\qquad \forall h \in (0,h_0]
\end{equation}
if $h_0$ small enough and $\beta$ large enough, and continuity 
\begin{equation}\label{eq:continuity}
a_h(v,w) 
\lesssim 
\Big(\tn v \tn_h + h^{-1/2}\|T_{1,k}(v) \|_{\partial \Omega_h} \Big) 
\tn w \tn_h, \qquad \forall v \in V + V_h, w \in V_h
\end{equation}
where 
\begin{equation}
V = H^{k+1/2}(\mcN_h) \cap H^{3/2}(\mcN_h) \cap H^{p+1/2}(\mcN_h)
\end{equation}
is the space on which the functional $V \ni v \mapsto a_h(v,w) \in \IR$, for 
a fixed $w\in V_h$ and fixed $h \in (0,h_0]$ is bounded.
The continuity estimate (\ref{eq:continuity}) follows directly 
from the Cauchy-Schwarz inequality and we next verify the coercivity 
estimate (\ref{eq:coercivity}). 

\paragraph{Verification of (\ref{eq:coercivity}).} Using the 
notation 
\begin{equation}
T_{1,k}(v) = T_k(v) - v
\end{equation}
we obtain
\begin{align}
a_h(v,v) 
&= (\nabla v, \nabla v)_{\Omega_h} + j_h(v,v)
- 2(n_h\cdot \nabla v,v)_{\partial \Omega_h}  
+ \beta h^{-1} (v,v)_{\partial \Omega_h}
\\ \nonumber
&\qquad + \beta h^{-1}(T_{1,k}(v),v)_{\partial \Omega_h} 
- (T_{1,k}(v),n_h\cdot \nabla v)_{\partial \Omega_h}
\\
&\gtrsim 
\|\nabla v\|^2_{\Omega_h} + \tn v\tn^2_{j_h}
- h^{1/2} \|n_h \cdot \nabla v\|_{\partial \Omega_h} 
     h^{-1/2} \|v\|_{\partial \Omega_h}  
+ \beta h^{-1}\|v\|^2_{\partial \Omega_h}
\\ \nonumber
&\qquad 
- \beta h^{-1/2}\|T_{1,k}(v)\|_{\partial \Omega_h}
h^{-1/2}\|v\|_{\partial \Omega_h} 
- h^{-1/2}\|T_{1,k}(v)\|_{\partial \Omega_h}
h^{1/2}\|n_h\cdot \nabla v\|_{\partial \Omega_h}
%
%
\end{align}
Now we have the inverse bounds
\begin{equation}\label{eq:inversea}
h^{1/2}\| n_h \cdot \nabla v \|_{\partial \Omega_h} 
\lesssim 
\|\nabla v \|_{\mcN_h(\partial \Omega_h)}
\end{equation}
\begin{align}
h^{-1/2}\|T_{1,k}(v)\|_{\partial \Omega_h} 
&\lesssim 
h^{-1}\|T_{1,k}(v)\|_{\mcN_h(\partial \Omega_h)}
\\
&\lesssim 
\underbrace{\left( \sum_{j=1}^k \frac{\delta_h^j}{h^j} \right)}_{\lesssim \gamma(h) \sim h^{-1}o(h)} 
\|\nabla v\|_{\mcN_h(\partial \Omega_h)} 
\\ \label{eq:inverseb}
&\lesssim \gamma(h) \|\nabla v\|_{\mcN_h(\partial \Omega_h)}
\end{align}
where $\gamma(h)\rightarrow 0$ as $h\rightarrow 0$. Using (\ref{eq:inversec}) together with obvious estimates these 
bounds conclude the proof of the coercivity result 
(\ref{eq:coercivity}) for $\beta$ large enough and 
$h\in(0,h_0]$, with $h_0$ small enough.

{\hfill \mbox{\fbox{} } \newline}


\subsection{Interpolation Estimates}
 
Let 
\begin{equation}\label{eq:ext_interp}
\pi_{h}: H^1(\Omega) \ni u \mapsto \pi_{SZ,h} Eu \in V_{h}
\end{equation}
where $E$ is the extension operator introduced in Section \ref{sec:extension}, and $\pi_{SZ,h}$ is the Scott-Zhang 
interpolation operator. The following error estimate for the Scott-Zhang interpolant is well
known \cite{SZ90}
\begin{equation}\label{eq:interpolationestimate}
\|u - \pi_{SZ,h} u \|_{H^m(K)} 
\lesssim h^{s-m}\|u\|_{H^s(\mcN_h(K))},
\qquad 
0\leq m \leq s \leq p+1, 
\qquad
K \in \mcK_h
\end{equation}
Using the properties of the extension operator we then immediately deduce this
interpolation error estimate for (\ref{eq:ext_interp})
\begin{equation}\label{eq:interpolationestimatedg}
\tn u - \pi_h u \tn_h 
+ h^{-1/2}\|T_{1,k} (u - \pi_h u) \|_{\partial \Omega_h} 
\lesssim 
h^p \| u \|_{H^{p+1}(\Omega)} 
\end{equation}
\paragraph{Verification of (\ref{eq:interpolationestimatedg}).}
The first term in (\ref{eq:interpolationestimatedg}) is estimated 
using the trace inequality 
\begin{equation}\label{eq:trace}
\| v \|^2_{\partial \Omega_h \cap K} 
\lesssim h^{-1}\| v \|^2_K + h \|\nabla v \|_K^2, \qquad K \in \mcK_{h} 
\end{equation}
see \cite{HaHaLa03}, followed by the interpolation 
estimate (\ref{eq:interpolationestimate}) and stability 
of the extension operator (\ref{eq:extensionstability}). Again 
using the trace inequality (\ref{eq:trace}) the second term in 
(\ref{eq:interpolationestimatedg}) can be estimated as follows
\begin{align}
h^{-1/2}\|T_{1,k} (u - \pi_h u) \|_{\partial \Omega_h}
&\lesssim 
h^{-1}\|T_{1,k} (u - \pi_h u) \|_{\mcN_h(\partial \Omega_h)} 
+ 
\|\nabla T_{1,k} (u - \pi_h u) \|_{\mcN_h(\partial \Omega_h)}
\\
&\lesssim h^{p} \|u \|_{H^{p+1}(\Omega)} 
\end{align}
where finally we used the fact that $\delta_h \lesssim h$ and 
the estimate
\begin{align}
h^{m-1}\| \nabla^m T_{1,k} (u - \pi_h u) \|_K 
&\lesssim 
\sum_{j=1}^k \delta_h^j h^{m-1} \|(u - \pi_h u )\|_{H^{j+m}(K)} 
\\
&\lesssim 
\sum_{j=1}^k h^j h^{m-1} h^{p+1-(j+m)} \|u \|_{H^{p+1}(\mcN(K))} 
\\
&\lesssim 
h^{p}\|u \|_{H^{p+1}(\mcN(K))} 
\end{align}
for $m=0,1$ and $K \in \mcK_h(\partial \Omega_h)$. 
{\hfill \mbox{\fbox{} } \newline}
 
\subsection{Error Estimates}

\begin{thm}\label{thm:energy}
If $\delta_h = o(h)$, then the following estimate holds
\begin{align}\label{eq:errorestenergy}
\tn u - u_h \tn_{h} 
&\lesssim 
h^{p} \|u \|_{H^{p+1}(\Omega)}
+ 
h^{-1/2} \delta_h^{k+1} 
\sup_{0\leq t \leq \delta_0} \| D^{k+1} u\|_{L^2(\partial \Omega_t)}
\\ \nonumber
&\qquad + 
\delta_h^{l+1} 
\sup_{-\delta_0\leq t < 0} 
\| D_n^{l} (f + \Delta u)\|_{L^2(\partial \Omega_t)}
\end{align}
\end{thm}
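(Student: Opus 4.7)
The plan is to run a standard Strang-type argument using the pieces already assembled in the previous subsections: coercivity (\ref{eq:coercivity}), continuity (\ref{eq:continuity}), the interpolation bound (\ref{eq:interpolationestimatedg}), and the consistency estimate (\ref{eq:consistency-ii}). First, I would split the error in the usual way,
\begin{equation*}
u - u_h = (u - \pi_h u) + (\pi_h u - u_h) =: \eta + e_h,
\end{equation*}
where $\pi_h$ is the extension-based Scott--Zhang operator of (\ref{eq:ext_interp}) and $e_h \in V_h$. By the triangle inequality it suffices to estimate $\tn \eta \tn_h$ and $\tn e_h \tn_h$ separately; the first is immediately controlled by $h^p\|u\|_{H^{p+1}(\Omega)}$ via (\ref{eq:interpolationestimatedg}).

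Next, I would apply coercivity (\ref{eq:coercivity}) to $e_h$ and rewrite
\begin{equation*}
\tn e_h \tn_h^2 \lesssim a_h(e_h,e_h) = -a_h(\eta,e_h) + a_h(u - u_h,e_h).
\end{equation*}
The first term on the right is bounded by continuity (\ref{eq:continuity}) combined with the interpolation bound (\ref{eq:interpolationestimatedg}), giving a contribution of order $h^p \|u\|_{H^{p+1}(\Omega)} \tn e_h \tn_h$ (note that $e_h \in V_h$, so the continuity bound is applicable). For the second term I would plug $v = e_h$ into the consistency identity (\ref{eq:consistency-a}) and invoke the already-proved bound (\ref{eq:consistency-ii}), which yields
\begin{equation*}
|a_h(u-u_h,e_h)| \lesssim \Big(h^{-1/2}\delta_h^{k+1} \sup_{0\le t \le \delta_0}\|D^{k+1}u\|_{L^2(\partial\Omega_t)} + \delta_h^{l+1}\sup_{0 \le t \le \delta_0}\|D_n^l(f+\Delta u)\|_{L^2(\partial\Omega_t)}\Big)\tn e_h \tn_h.
\end{equation*}

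Dividing by $\tn e_h \tn_h$, combining with the interpolation bound for $\tn \eta \tn_h$ via the triangle inequality, and observing that the support of the residual $f+\Delta u$ on $\Omega_h \setminus \Omega$ lives near the boundary so that the supremum over the inner level sets $-\delta_0 \le t < 0$ dominates (as stated in the theorem), delivers the claimed estimate. The only subtle point is that the continuity bound (\ref{eq:continuity}) contains the extra $h^{-1/2}\|T_{1,k}(\eta)\|_{\partial \Omega_h}$ term — this is precisely the reason the Taylor-perturbation part was included in (\ref{eq:interpolationestimatedg}), so it is subsumed into the $h^p\|u\|_{H^{p+1}(\Omega)}$ bound. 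Aside from this bookkeeping, no new analytical ingredient is required; the main obstacle, namely producing a coercivity estimate compatible with the nonsymmetric Taylor terms and with arbitrarily cut boundary elements, has already been resolved in the verification of (\ref{eq:coercivity}).
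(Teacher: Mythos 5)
Your proposal is correct and follows essentially the same route as the paper: the same splitting $u-u_h=(u-\pi_h u)+(\pi_h u-u_h)$, coercivity applied to the discrete part, continuity plus the interpolation bound (\ref{eq:interpolationestimatedg}) for $a_h(\pi_h u-u,\cdot)$, and the consistency estimate (\ref{eq:consistency-ii}) for $a_h(u-u_h,\cdot)$. Your remark about the $h^{-1/2}\|T_{1,k}(\cdot)\|_{\partial\Omega_h}$ term being absorbed by (\ref{eq:interpolationestimatedg}) is exactly the bookkeeping the paper relies on.
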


\begin{table}[h]\label{table:energy}
\begin{center}
\begin{tabular}{|c|c||c|c||c|c|}
\hline 
$p$ & $h^p$ & $k$ & $h^{-1/2}\delta_h^{k+1}$& l & $\delta_h^{l+1}$ 
\\ 
\hline 
1 & $h^1$ & 0 & $h^{1.5}$ &  0 &  $h^2$
\\ 
2 & $h^2$ & 1 & $h^{3.5}$ & 1 & $h^4$ 
\\ 
3 & $h^3$ & 2 & $h^{5.5}$ & 2 & $h^6$ 
\\ 
4 & $h^4$ & 3 & $h^{7.5}$ & 3 & $h^8$
\\ 
\hline 
\end{tabular}
\caption{The order of the terms in the energy error estimate under the 
assumption $\delta_h \lesssim h^2$. We conclude that we obtain optimal order of convergence for $p=2,3,$ with one term, $k=1$, in the Taylor expansion and for $p=4,5,$ with two terms, $k=2$.}
\end{center}
\end{table}

\begin{proof} We first note that adding and subtracting an interpolant 
and using the triangle inequality and the interpolation estimate (\ref{eq:interpolationestimatedg}), we obtain
\begin{align}
\tn u -  u_h \tn_h 
&\lesssim 
\tn u - \pi_h u \tn_h + \tn \pi_h u - u_h \tn_h 
\\ \label{proof:energya}
&\lesssim h^{p} \| u \|_{H^{p+1}(\Omega)} + \tn \pi_h u - u_h \tn_h
\end{align}
For the second term on the right hand side we have the estimates
\begin{align}
\tn \pi_h u - u_h \tn_h^2 
&\lesssim a_h(\pi_h u - u_h,\pi_h u - u_h) 
\\
&= a_h(\pi_h u - u,\pi_h u - u_h) + a_h(u- u_h,\pi_h u - u_h)
\\
&\lesssim 
\Big( \tn \pi_h u - u \tn_h  
+ h^{-1/2}\|T_{1,k}(\pi_h u - u)\|_{\partial \Omega_h}\Big) 
\tn \pi_h u - u_h \tn_h
\\ \nonumber
&\qquad + h^{-1/2}\|u\circ p_h - T_k(u) \|_{\partial \Omega_h} 
\tn \pi_h u - u_h \tn_h
\\ \nonumber
&\qquad + \| f + \Delta u \|_{\Omega_h\setminus \Omega_h} 
\| \pi_h u - u_h \|_{\Omega_h \setminus \Omega}
\\
&\lesssim \label{proof:energyb}
h^{p} \| u \|_{H^{p+1}(\Omega)} 
\tn \pi_h u - u_h \tn_h
\\ \nonumber
&\qquad + 
 h^{-1/2} \delta_h^{k+1} \sup_{0\leq t 
\leq \delta_0} \| D^{k+1} u\|_{L^2(\partial \Omega_t)}
\tn \pi_h u -  u_h \tn_h
\\ \nonumber
&\qquad + 
\delta_h^{l+1} 
\sup_{-\delta_0\leq t < 0} 
\| D_n^{l} (f + \Delta u)\|_{L^2(\partial \Omega_t)}
\tn \pi_h u -  u_h \tn_h
\end{align}
where we used coercivity (\ref{eq:coercivity}), 
added and subtracted the exact solution $u$, 
estimated the first term using continuity 
(\ref{eq:continuity}) followed by the interpolation 
estimate (\ref{eq:interpolationestimatedg}) and the 
second using the consistency estimate (\ref{eq:consistency-c}). 
Combining estimates (\ref{proof:energya}) and (\ref{proof:energyb}) 
concludes the proof.
\end{proof}

\begin{thm}\label{thm:L2}
If $\delta_h \lesssim h^2$, then the following estimate 
holds
\begin{align}\label{eq:errorestL2}
\|e\|_{\Omega_h} 
&\lesssim h^{p+1} \| u \|_{H^{p+1}(\Omega)}
\\ \nonumber 
&\qquad + \delta^{k+1}_h 
\sup_{0 \leq t \leq \delta_0} \| D^{k+1} u\|_{L^2(\partial \Omega_t)}
\\ \nonumber 
&\qquad + \delta_h^{l+3/2} 
\sup_{0 \leq t \leq \delta_0} 
\|D_n^l(f+ \Delta u) \|_{\partial\Omega_t}
\end{align}
\end{thm}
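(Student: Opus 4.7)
I would use an Aubin–Nitsche duality argument adapted to the cut/boundary-correction setting. Start by splitting $\|e\|_{\Omega_h}^2 = \|e\|_{\Omega\cap\Omega_h}^2 + \|e\|_{\Omega_h\setminus\Omega}^2$. The strip term is immediate: by (\ref{eq:consistency-hh}) together with Theorem \ref{thm:energy},
\begin{equation*}
\|e\|_{\Omega_h\setminus\Omega} \lesssim \delta_h^{1/2}\,\tn e \tn_h,
\end{equation*}
which under $\delta_h\lesssim h^2$ is dominated by the right-hand side of (\ref{eq:errorestL2}). For the main term, introduce the dual problem: let $\phi\in H_0^1(\Omega)$ solve $-\Delta\phi = e\chi_{\Omega\cap\Omega_h}$ in $\Omega$ with $\phi=0$ on $\partial\Omega$; extend $\phi$ to $U_{\delta_0}(\Omega)$ by $E$ and let $\pi_h\phi$ be its Scott–Zhang interpolant in $V_h$. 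Elliptic regularity gives $\|\phi\|_{H^2(\Omega)}\lesssim\|e\|_{\Omega_h}$.

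Next, integrate by parts on $\Omega$ to express $\|e\|_{\Omega\cap\Omega_h}^2 = (-\Delta\phi, e)_{\Omega\cap\Omega_h}$ as a combination of $a_h(e,\phi)$ plus geometric correction terms that encode (i) the difference between integrating on $\Omega$ versus $\Omega_h$ and (ii) the difference between imposing boundary data on $\partial\Omega$ versus $\partial\Omega_h$. Since $\phi$ vanishes on $\partial\Omega$ and $\mathrm{dist}(\partial\Omega_h,\partial\Omega)\lesssim\delta_h\lesssim h^2$, these correction terms are of higher order. Decompose $a_h(e,\phi) = a_h(e,\phi-\pi_h\phi) + a_h(e,\pi_h\phi)$. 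The first piece is bounded by continuity (\ref{eq:continuity}), the interpolation estimate (\ref{eq:interpolationestimatedg}) applied at polynomial degree one to $\phi$, and Theorem \ref{thm:energy}, producing the $h^{p+1}\|u\|_{H^{p+1}(\Omega)}$ contribution. The second piece is treated with the consistency identity (\ref{eq:consistency-a}).

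The gain over the energy estimate comes from the smoothness of $\pi_h\phi$. A trace and stability argument, combined with the vanishing of $\phi$ on $\partial\Omega$, yields
\begin{equation*}
\|n_h\cdot\nabla\pi_h\phi\|_{\partial\Omega_h} + h^{-1}\|\pi_h\phi\|_{\partial\Omega_h} \lesssim \|\phi\|_{H^2(\Omega)}, \qquad \|\pi_h\phi\|_{\Omega_h\setminus\Omega}\lesssim \delta_h^{1/2}\|\phi\|_{H^2(\Omega)},
\end{equation*}
the first saving a factor $h^{1/2}$ relative to the generic bound used in Theorem \ref{thm:energy} and the second contributing an extra $\delta_h^{1/2}$ on the residual factor. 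Inserting these together with the Taylor bound (\ref{eq:consistency-g}) and the residual bound (\ref{eq:consistency-h}) into (\ref{eq:consistency-a}) produces, after division by $\|e\|_{\Omega_h}$, the two remaining terms in (\ref{eq:errorestL2}). The main obstacle is the careful bookkeeping in step two: one must show that every boundary integral arising from the mismatch between $\partial\Omega$ and $\partial\Omega_h$ can be controlled by a power of $\delta_h$ times $\|\phi\|_{H^2(\Omega)}$ large enough to be absorbed, which is exactly where the stronger hypothesis $\delta_h\lesssim h^2$ enters in place of the merely $o(h)$ assumption used for the energy estimate.
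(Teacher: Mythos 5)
Your strategy is essentially the paper's: the same dual problem with data $e\chi_{\Omega\cap\Omega_h}$, the same isolation of the strip $\Omega_h\setminus\Omega$, the same splitting $a_h(e,\phi)=a_h(e,\phi-\pi_h\phi)+a_h(e,\pi_h\phi)$ with continuity plus interpolation for the first piece and the consistency identity for the second, and the same mechanism of exploiting $\phi|_{\partial\Omega}=0$ to sharpen the boundary bounds on $\pi_h\phi$. However, two points that you defer to ``careful bookkeeping'' are in fact where most of the work lies, and as written the argument does not close.

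First, the boundary terms produced by integrating by parts against $\phi$ rather than against a discrete function --- in the paper's notation the term $III=b_h(e,\phi)=(T_{1,k}(e),n_h\cdot\nabla\phi)_{\partial\Omega_h}-\beta h^{-1}(T_k(e),\phi)_{\partial\Omega_h}+(n_h\cdot\nabla e,\phi)_{\partial\Omega_h}$ --- are not controlled by ``a power of $\delta_h$ times $\|\phi\|_{H^2(\Omega)}$'' acting on $\phi$ alone. The factor $\|T_{1,k}(e)\|_{\partial\Omega_h}$ involves the discrete error and must itself be estimated; the paper devotes a separate argument, (\ref{eq:IIIa}), combining interpolation, trace and inverse inequalities on $\mcN_h(\partial\Omega_h)$ with $\delta_h\lesssim h^2$ to show $\|T_{1,k}(e)\|_{\partial\Omega_h}\lesssim h^{p+1}\|u\|_{H^{p+1}(\Omega)}+(h^{-3/2}\delta_h)\,h\tn e\tn_h$. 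Likewise the penalty term carries an explicit $h^{-1}$ that is only beaten by combining $h^{-1/2}\|e\|_{\partial\Omega_h}\lesssim\tn e\tn_h$ with the bound $\|\phi\|_{\partial\Omega_h}\lesssim\delta_h^{1/2}\|n\cdot\nabla\phi\|_{U_{\delta_h}(\partial\Omega)}$ of (\ref{eq:technicalA}). Neither estimate is supplied or even identified in your plan, and without them the ``geometric correction terms'' cannot be absorbed.

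Second, your stated bound $\|\pi_h\phi\|_{\Omega_h\setminus\Omega}\lesssim\delta_h^{1/2}\|\phi\|_{H^2(\Omega)}$ is too weak by half a power of $\delta_h$: multiplied against the residual bound $\delta_h^{l+1/2}\sup_t\|D_n^l(f+\Delta u)\|_{\partial\Omega_t}$ from (\ref{eq:consistency-h}) it yields only $\delta_h^{l+1}$, not the $\delta_h^{l+3/2}$ asserted in the theorem. Because $\phi$ vanishes on $\partial\Omega$ and the strip has width $\lesssim\delta_h$, the one-dimensional Poincar\'e inequality in the normal direction gives the full gain $\|\phi\|_{\Omega_h\setminus\Omega}\lesssim\delta_h\|n\cdot\nabla\phi\|_{U_{\delta_h}(\partial\Omega)}$, which is (\ref{eq:technicalC}); combined with $\|\pi_h\phi-\phi\|_{\Omega_h\setminus\Omega}\lesssim h^2\|\phi\|_{H^2(\Omega)}$ and $\delta_h\lesssim h^2$, this is what produces the stated rate. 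Your treatment of the strip term and of $a_h(e,\phi-\pi_h\phi)$ is correct and matches the paper.
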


\begin{table}[h]\label{table:L2}
\begin{center}
\begin{tabular}{|c|c||c|c||c|c|}
\hline 
$p$ & $h^{p+1}$ & $k$ &  $\delta_h^{k+1}$ & l & $\delta_h^{l+3/2}$ 
\\ 
\hline 
1 & $h^2$ & 0 &  $h^{2}$ &  0 &  $h^3$
\\ 
2 & $h^3$ & 1 & $h^{4}$ & 1 & $h^5$ 
\\ 
3 & $h^4$ & 2 & $h^{6}$ & 2 & $h^7$ 
\\ 
4 & $h^5$ & 3 & $h^{8}$ & 3 & $h^9$
\\ 
\hline 
\end{tabular}
\caption{The order of the terms in the energy error estimate under the 
assumption that $\delta_h \lesssim h^2$. We conclude that we obtain optimal order of convergence for $p=2,3,$ with one term, $k=1$, in the Taylor expansion and for $p=4,5,$ with two terms, $k=2$.}
\end{center}
\end{table}

\begin{proof} Let $\phi \in H^1_0(\Omega)$ be the solution 
to the dual problem
\begin{equation}\label{eq:dual}
a(v,\phi) = (v,\psi), \qquad v \in H^1_0(\Omega)
\end{equation}
where $\psi=u-u_h$ on $\Omega_h$ and $\psi=0$ on 
$\Omega \setminus \Omega_h$, and extend $\phi$ using 
the extension operator to $U_{\delta_0}(\Omega)$. Then 
we have the stability estimate
\begin{equation}\label{eq:dualstab}
\| \phi \|_{H^2(\Omega)} \lesssim 
\| \psi \|_{\Omega \cap \Omega_h}
\end{equation}

We obtain the following representation formula for the error 
\begin{align}
\|e\|^2_{\Omega_h} &= (e,\psi+\Delta \phi)_{\Omega_h} 
- (e,\Delta \phi)_{\Omega_h}
\\
&=(e,\psi +\Delta \phi)_{\Omega_h \setminus \Omega}
+ (\nabla e,\nabla \phi )_{\Omega_h} 
- (e,n_h\cdot \nabla \phi)_{\partial \Omega_h}
\\
&=(e,\psi +\Delta \phi)_{\Omega_h\setminus \Omega}
+ a_0(e,\phi) + b_h(e,\phi)
\\
&=I + II + III
\end{align}
where 
\begin{align}
III&= (T_k(e) - e,n_h \cdot \nabla \phi)_{\partial \Omega_h} 
- \beta h^{-1} (T_k(e),\phi)_{\partial \Omega_h} 
+ (n_h \cdot \nabla e, \phi )_{\partial \Omega_h}
\\
&= (T_{1,k}(e),n \cdot \nabla \phi)_{\partial \Omega_h} 
- \beta h^{-1} (e,\phi)_{\partial \Omega_h} 
\\ \nonumber
& \qquad
- \beta h^{-1} (T_{1,k}(e),\phi)_{\partial \Omega_h}
+ (n_h \cdot \nabla e, \phi )_{\partial \Omega_h}
\end{align}
\paragraph{Term $\bfI$.} We have 
\begin{align}
|I|&=|(e,\psi+\Delta \phi)_{\Omega_h \setminus \Omega}|
\\
&\lesssim \|e\|_{\Omega_h \setminus \Omega} 
\|\psi+\Delta \phi\|_{\Omega_h \setminus \Omega}
\\
&\lesssim 
\Big(\delta_h^2 \|n\cdot \nabla e \|^2_{\Omega_h \setminus \Omega}
+ \delta_h \|e\|^2_{\partial \Omega_h} \Big)^{1/2}
\Big(\| \psi \|_{\Omega_h \setminus \Omega} +  
\| \Delta \phi \|_{\Omega_h\setminus \Omega} \Big)
\\
&\lesssim 
\Big( (\delta_h^2 + h \delta_h) \tn e \tn_h^2 \Big)^{1/2}
\Big( \|e\|_{\Omega_h\setminus \Omega} + \| \phi \|_{H^2(\Omega)} \Big)
\\ \label{est:L2I}
&\lesssim 
\underbrace{(h^{-2} \delta_h + h^{-1}\delta_h)^{1/2}}_{\lesssim 1} h \tn e \tn_h \| e \|_{\Omega_h}
\end{align}
Here we used the estimate 
\begin{equation}\label{eq:poincarespecial}
\| v \|^2_{\Omega_h \setminus \Omega}
\lesssim 
\delta_h^2 \| n\cdot \nabla v \|^2_{\Omega_h \setminus \Omega}
+
\delta_h \| v \|^2_{\partial \Omega_h}, \qquad v \in H^1(\Omega_h)
\end{equation}
with $v=e$, the definition of the energy norm to conclude that 
$h^{-1}\|e\|^2_{\partial \Omega_h} \lesssim \tn e \tn_h^2$, the 
stability (\ref{eq:extensionstability}) of the extension operator, 
the stability (\ref{eq:dualstab}) of the dual problem and the 
assumption that $\delta_h \lesssim h^{2}$.

\paragraph{Term $\bfI\bfI$.} Adding and subtracting an interpolant 
we obtain
\begin{align}
|II| &= |a_h(e,\phi - \pi_h \phi) + a_h(e,\pi_h\phi)|
\\
&\lesssim 
\tn e \tn_h \tn \phi - \pi_h \phi \tn_h 
+
|a_h(e,\pi_h\phi)|
\\
&\lesssim 
h \tn e \tn_h \|\phi\|_{H^2(\Omega)}
+ |a_h(e,\pi_h\phi)|
\\ \label{proof:L2a}
&\lesssim 
h \tn e \tn_h \| e\|_{\Omega_h}
+ |a_h(e,\pi_h\phi)|
\end{align}
To estimate the second term on the right hand side we employ (\ref{eq:consistency-i}), with $v= \pi_h \phi$,
\begin{align}\label{proof:L2b}
|a_h(e,\pi_h \phi)| &\leq 
\delta^{k+1}_h 
\sup_{0 \leq t \leq \delta_0} \| D^{k+1} u\|_{L^2(\partial \Omega_t)}
\Big(\|n_h\cdot \nabla \pi_h \phi  \|_{\partial \Omega_h} 
+ h^{-1}\beta \|\pi_h \phi \|_{\partial \Omega_h} \Big)
\\ \nonumber
&\qquad + 
\delta_h^{l+1/2} \sup_{0 \leq t \leq \delta_0} 
\|D_n^l(f+ \Delta u) \|_{\partial\Omega_t} 
\| \pi_h \phi \|_{\Omega_h\setminus \Omega}
\end{align} 
Here we have the estimates
\begin{align}
\|n_h \cdot \nabla \pi \phi \|_{\partial \Omega_h} 
+ 
h^{-1}\|\pi \phi\|_{\partial \Omega_h}
&\lesssim \label{proof:L2c}
\|n_h \cdot \nabla (\pi \phi - \phi) \|_{\partial \Omega_h} 
+ 
h^{-1}\|\pi \phi - \phi \|_{\partial \Omega_h}
\\ \nonumber
&\qquad 
+ \|n_h \cdot \nabla \phi \|_{\partial \Omega_h} 
+ 
h^{-1}\|\phi\|_{\partial \Omega_h}
\\
&\lesssim
h^{-1/2}\tn \pi \phi - \phi \tn_h
\\ \nonumber 
&\qquad +
\|\phi \|_{H^2(\Omega_h)}
+
h^{-1} \delta_h^{1/2} \|\phi \|_{H^1(U_{\delta_h(\partial \Omega)})}  
\\
&\lesssim
h^{1/2}\| \phi \|_{H^2(\Omega)}
+
\|\phi \|_{H^2(\Omega_h)}
+
h^{-1} \delta_h^{1/2} \|\phi \|_{H^1(U_{\delta_h(\partial \Omega)})}
\\
&\lesssim
(h^{1/2} + 1 + h^{-1}\delta_h^{1/2})\| e \|_{\Omega_h}
\\
&\lesssim
\| e \|_{\Omega_h}
\end{align}
and 
\begin{align}
\| \pi_h \phi \|_{\Omega_h\setminus \Omega} 
&
\leq \label{proof:L2d}
\| \pi_h \phi - \phi \|_{\Omega_h\setminus \Omega}
+
\| \phi \|_{\Omega_h\setminus \Omega}
\\
&\lesssim 
h^{2}\| \phi \|_{H^2(\Omega)} + 
\delta_h \|  \nabla \phi \|_{U_{\delta_h}(\partial \Omega)}
\\
& \lesssim 
(h^{2} + \delta_h) \|e \|_{\Omega_h}
\\
& \lesssim 
\delta_h \|e \|_{\Omega_h}
\end{align}
where, in both estimates, we used the assumption 
$\delta_h \lesssim h^2$, as well as the following bounds
\begin{equation}\label{eq:technicalA}
\|\phi \|_{\partial \Omega_h} 
\lesssim \delta_h^{1/2} 
\| n\cdot \nabla \phi \|_{U_{\delta_h}(\partial \Omega)}
\end{equation}
\begin{equation}\label{eq:technicalC}
\|\phi \|_{\Omega_h\setminus \Omega} 
\lesssim 
\|\phi \|_{U_{\delta_h}(\partial \Omega)} 
\lesssim \delta_h 
\| n\cdot \nabla \phi \|_{U_{\delta_h}(\partial \Omega)}
\end{equation}
see the Appendix for the proof of these estimates.
Combining estimates (\ref{proof:L2b}), (\ref{proof:L2c}), and 
(\ref{proof:L2d}), we arrive at 
\begin{align}\label{proof:L2e}
|a_h(e,\pi_h \phi)| 
&\lesssim \Big(
\delta^{k+1}_h 
\sup_{0 \leq t \leq \delta_0} \| D^{k+1} u\|_{L^2(\partial \Omega_t)}
\\ \nonumber 
& \qquad + \delta_h^{l+3/2} 
\sup_{0 \leq t \leq \delta_0} 
\|D_n^l(f+ \Delta u) \|_{\partial\Omega_t} \Big) \|e \|_{\Omega_h}
\end{align}
which together with (\ref{proof:L2a}) gives
\begin{align}\label{est:L2II}
|II| 
&\lesssim \Big( h \tn e \tn_h 
+ 
 \delta^{k+1}_h 
\sup_{0 \leq t \leq \delta_0} \| D^{k+1} u\|_{L^2(\partial \Omega_t)}
\\ \nonumber 
& \qquad + \delta_h^{l+3/2} 
\sup_{0 \leq t \leq \delta_0} 
\|D_n^l(f+ \Delta u) \|_{\partial\Omega_t}
  \Big) \|e \|_{\Omega_h}
\end{align}

\paragraph{Term $\bfI\bfI\bfI$.}
Using the Cauchy-Schwarz inequality we get
\begin{align}
|III| &= |b_h(e,\phi)|
\\
&\lesssim \|T_{1,k}(e)\|_{\partial \Omega_h} 
\|n_h \cdot \nabla \phi\|_{\partial \Omega_h} 
+ \beta h^{-1} \|e\|_{\partial \Omega_h} \|\phi\|_{\partial \Omega_h} 
\\ \nonumber
&\qquad + \beta h^{-1} \|T_{1,k}(e)\|_{\partial \Omega_h} \|\phi\|_{\partial \Omega_h}
+ \|n_h \cdot \nabla e\|_{\partial \Omega_h} \|\phi\|_{\partial \Omega_h}
\\
&\lesssim \|T_{1,k}(e)\|_{\partial \Omega_h} 
\Big( h^{-1}\|\phi\|_{\partial \Omega_h} +  \|n_h \cdot \nabla \phi\|_{\partial \Omega_h} \Big)
\\ \nonumber
&\qquad + \tn e \tn_h h^{-1/2}\| \phi \|_{\partial \Omega_h}
\\
&\lesssim \Big( \|T_{1,k}(e)\|_{\partial \Omega_h} 
+ h^{-1/2} \delta_h \tn e \tn_h \Big) \| e \|_{\Omega_h}
\\ \label{est:L2III}
&\lesssim \Big( h^{p+1} \| u \|_{H^{p+1}(\Omega)} + 
(h^{-3/2}\delta_h) h \tn e \tn_h \Big) \|e\|_{\Omega_h}
\end{align}
where we used (\ref{eq:technicalA}) and (\ref{eq:technicalC}) 
followed by the stability estimate for the dual problem (\ref{eq:dualstab}), and at last the estimate 
\begin{equation}\label{eq:IIIa}
\|T_{1,k}(e)\|_{\partial \Omega_h}
\lesssim 
h^{p+1} \| u \|_{H^{p+1}(\Omega)} + (h^{-3/2}\delta_h) h \tn e \tn_h
\end{equation}

\paragraph{Verification of (\ref{eq:IIIa}).} We have
\begin{align}
\|T_{1,k}(e)\|_{\partial\Omega_h} 
&\lesssim 
\sum_{j=1}^k \delta_h^j \| D_{\nu_h}^j e \|_{\partial \Omega_h}
\end{align}
and for each of the terms $\| D_{\nu_h}^j e \|_{\partial \Omega_h}$, $j=1,\dots, k$, we obtain by adding and subtracting an interpolant, using the interpolation estimate (\ref{eq:interpolationestimate}) for the first 
term and an inverse estimate for the second, the estimates
\begin{align}
\| D_{\nu_h}^j e \|^2_{\partial \Omega_h} 
&\lesssim 
h^{-1}\| D_{\nu_h}^j e \|^2_{\mcN_h(\partial \Omega_h)}
+
h \| \nabla D_{\nu_h}^j e \|^2_{\mcN_h(\partial \Omega_h)}
\\
&\lesssim 
h^{-1}\| D_{\nu_h}^j (u-\pi_h u) \|^2_{\mcN_h(\partial \Omega_h)}
+
h \| \nabla D_{\nu_h}^j (u - \pi_h u) \|^2_{\mcN_h(\partial \Omega_h)}
\\ \nonumber
& \qquad
+ h^{-1}\| D_{\nu_h}^j (\pi_h u - u_h) \|^2_{\mcN_h(\partial \Omega_h)}
+ h \| \nabla D_{\nu_h}^j (\pi_h u - u_h)  \|^2_{\mcN_h(\partial \Omega_h)}
\\
&\lesssim 
h^{2p+1-2j}\| u \|^2_{H^{p+1}(\mcN_h(\mcN_h(\partial \Omega_h)))}
+ h^{1-2j}\| \nabla(\pi_h u - u_h) \|^2_{\mcN_h(\partial \Omega_h)}
\\
&\lesssim 
h^{2p+1-2j}\| u \|^2_{H^{p+1}(\Omega)}
+ h^{1-2j} \| \nabla e \|^2_{\mcN_h(\partial \Omega_h)}
\end{align}
which leads to
\begin{align}
 \delta_h^{2j} \| D_{\nu_h}^j e \|^2_{\partial \Omega_h}
 &\lesssim 
 h^{-1}(\delta_h/h)^{2j} h^{2(p+1)} \| u \|^2_{H^{p+1}(\Omega)}
+   h (\delta_h/h)^{2j} \| \nabla e \|^2_{\mcN_h(\partial \Omega_h)}
\\
&\lesssim 
(h^{-3}\delta_h^2) h^{2(p+1)} \| u \|^2_{H^{p+1}(\Omega)}
+   (h^{-3} \delta_h^2) h^2 \| \nabla e \|^2_{\mcN_h(\partial \Omega_h)}
\end{align}
where we used (\ref{eq:geomassum-a}) and the fact $\delta_h/h^2 
\lesssim 1$. Thus we have
\begin{align}
\|T_{1,k}(e)\|_{\partial\Omega_h} 
&\lesssim 
\sum_{j=1}^k \delta_h^j \| D_{\nu_h}^j e \|_{\partial \Omega_h}
\lesssim (h^{-3/2} \delta_h)
\Big( h^{p+1} \| u \|_{H^{p+1}(\Omega)}
+ h \tn e \tn_h \Big)
\end{align}

\paragraph{Conclusion of the Proof.} Collecting the bounds 
(\ref{est:L2I}), (\ref{est:L2II}), and (\ref{est:L2III}), of 
Terms $I,II,$ and $III$, we obtain
\begin{align}
\|e\|_{\Omega_h} 
&\lesssim h \tn e \tn_h 
\\ \nonumber
&\qquad + h^{p+1} \| u \|_{H^{p+1}(\Omega)}
\\ \nonumber 
&\qquad + \delta^{k+1}_h 
\sup_{0 \leq t \leq \delta_0} \| D^{k+1} u\|_{L^2(\partial \Omega_t)}
\\ \nonumber 
&\qquad + \delta_h^{l+3/2} 
\sup_{0 \leq t \leq \delta_0} 
\|D_n^l(f+ \Delta u) \|_{\partial\Omega_t}
\end{align}
which together with the energy norm error estimate (\ref{eq:errorestenergy}) concludes the proof.
%
%
%
%
\end{proof}

\begin{thm}\label{thm:Omega} The following estimates hold
\begin{equation}
\|\nabla e \|_\Omega \lesssim h^p\| u \|_{H^{p+1}(\Omega)} + \tn e \tn_h
\end {equation}
and 
\begin{equation} 
\qquad \| e \|_{\Omega} \lesssim h^{p+1}\|u \|_{H^{p+1}(\Omega)} + \| e\|_{\Omega_h} + h \tn e \tn_h
\end{equation}
\end{thm}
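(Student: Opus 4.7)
The plan is to split the integration domain as $\Omega = (\Omega \cap \Omega_h) \cup (\Omega \setminus \Omega_h)$. The part in $\Omega \cap \Omega_h$ is controlled directly by the corresponding norm over $\Omega_h$ (hence by $\tn e \tn_h$ or $\|e\|_{\Omega_h}$). The sliver $\Omega \setminus \Omega_h$ is thin, of width $O(\delta_h)$, and is crucially contained in $\mcN_h$ by the definition $\mcN_h = \mcN_h(\Omega \cup \Omega_h)$, so we can transport control from $\Omega_h$ to the sliver by means of the stabilization-based inverse estimates (\ref{eq:inversec}) and (\ref{eq:inverseL2}).

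For the first (gradient) estimate, I add and subtract an interpolant:
\begin{equation*}
\|\nabla e\|_{\Omega \setminus \Omega_h}
\leq \|\nabla(u - \pi_h u)\|_{\Omega \setminus \Omega_h}
+ \|\nabla(\pi_h u - u_h)\|_{\Omega \setminus \Omega_h}.
\end{equation*}
The first term is bounded by $h^p \|u\|_{H^{p+1}(\Omega)}$ via the Scott--Zhang estimate (\ref{eq:interpolationestimate}) together with stability of the extension operator (\ref{eq:extensionstability}). Since $\pi_h u - u_h \in V_h$ and $\Omega \setminus \Omega_h \subset \mcN_h$, for the second term I invoke (\ref{eq:inversec}) to get $\|\nabla(\pi_h u - u_h)\|_{\Omega \setminus \Omega_h} \lesssim \|\nabla(\pi_h u - u_h)\|_{\Omega_h} + \tn \pi_h u - u_h \tn_{j_h} \lesssim \tn \pi_h u - u_h \tn_h$, and a triangle inequality with the interpolation estimate (\ref{eq:interpolationestimatedg}) then yields $\lesssim \tn e \tn_h + h^p\|u\|_{H^{p+1}(\Omega)}$. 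Combining with the trivial bound $\|\nabla e\|_{\Omega \cap \Omega_h} \leq \tn e \tn_h$ gives the first inequality.

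For the $L^2$ estimate I use the same splitting and interpolant decomposition on the sliver:
\begin{equation*}
\|e\|_{\Omega \setminus \Omega_h}
\leq \|u - \pi_h u\|_{\Omega \setminus \Omega_h}
+ \|\pi_h u - u_h\|_{\Omega \setminus \Omega_h}.
\end{equation*}
The first term is $\lesssim h^{p+1}\|u\|_{H^{p+1}(\Omega)}$. For the second, I use (\ref{eq:inverseL2}) to obtain
\begin{equation*}
\|\pi_h u - u_h\|_{\Omega \setminus \Omega_h}^2
\lesssim \|\pi_h u - u_h\|_{\Omega_h}^2 + h^2 \tn \pi_h u - u_h \tn_{j_h}^2.
\end{equation*}
Triangle inequality plus interpolation bounds $\|\pi_h u - u_h\|_{\Omega_h} \lesssim \|e\|_{\Omega_h} + h^{p+1}\|u\|_{H^{p+1}(\Omega)}$ and $h \tn \pi_h u - u_h\tn_{j_h} \lesssim h(\tn e \tn_h + h^p\|u\|_{H^{p+1}(\Omega)})$. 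Adding $\|e\|_{\Omega \cap \Omega_h} \leq \|e\|_{\Omega_h}$ delivers the second inequality.

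There is no real obstacle; both estimates are routine consequences of the inverse estimates (\ref{eq:inversec})--(\ref{eq:inverseL2}) that compare norms on $\mcN_h$ with norms on $\Omega_h$, combined with standard interpolation on the extended function. The only subtle point to keep in mind is that the geometric sliver is treated entirely by exploiting its inclusion in $\mcN_h$, so that the stabilization-based inverse estimates apply to the discrete part $\pi_h u - u_h$ without needing a separate Poincaré-type argument like (\ref{eq:poincarespecial}).
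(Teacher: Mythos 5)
Your proposal is correct and follows essentially the same route as the paper: split off the sliver $\Omega\setminus\Omega_h\subset\mcN_h$, add and subtract the interpolant $\pi_h u$, bound the interpolation part by (\ref{eq:interpolationestimate}) and the discrete part $\pi_h u - u_h$ by the stabilization-based inverse estimates (\ref{eq:inversec}) and (\ref{eq:inverseL2}), then return to $e$ by a triangle inequality. Your closing observation that no Poincar\'e-type argument such as (\ref{eq:poincarespecial}) is needed here is also consistent with the paper's proof.
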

\begin{proof}
Adding and subtracting an interpolant, using the interpolation 
estimate (\ref{eq:interpolationestimate}), and the inverse inequality 
(\ref{eq:inversec}) or (\ref{eq:inverseL2}), we obtain, for $m=0,1,$
\begin{align}
\| \nabla^m e \|_{\Omega\setminus \Omega_h} 
& \lesssim \| \nabla^m (u - \pi_h u ) \|_{\Omega\setminus \Omega_h} 
+
\| \nabla^m (\pi_h u - u_h) \|_{\Omega\setminus \Omega_h}
 \\
& \lesssim h^{p+1-m} \|u \|_{H^{p+1}(\Omega)} 
+
\| \nabla^m (\pi_h u - u_h) \|_{\Omega_h} 
+ h^{1-m} \tn \pi_h u - u_h \tn_{j_h}
 \\
& \lesssim h^{p+1-m} \|u \|_{H^{p+1}(\Omega)} 
+
\| \nabla^m e  \|_{\Omega_h} 
+ h^{1-m} \tn e \tn_{j_h}
 \\
& \lesssim h^{p+1-m} \|u \|_{H^{p+1}(\Omega)} 
+
\| \nabla^m e  \|_{\Omega_h} 
+ h^{1-m} \tn e \tn_{h}
\end{align} 
which concludes the proof.
\end{proof}

\begin{rem}
If for a given $p$ the lowest values of $k$ and $l$ are chosen so that
optimal convergence is obtained, it is straightforward to use a trace
inequality, see (\ref{eq:trace_reg_bound_a}) and (\ref{eq:trace_reg_bound_b}), to show that 
\[
\|u - u_h\|_{L^2(\Omega_h)} + h \tn u - u_h \tn \lesssim h^{p+1} (\|f\|_{H^{p-1}(\Omega)}+\|u\|_{H^{p+1}(\Omega)})
\]
Therefore the regularities required for optimality of the consistency
error of the boundary approximation are always optimal compared to the
polynomial approximation.
\end{rem}

\begin{rem} We note that we obtain, as a special case, optimal order 
error estimates for the standard cut Nitsche method with 
approximate domains by assuming $k=0$ and 
\begin{equation}
\delta_h \lesssim h^{p+1/2}
\end{equation}
for the energy norm estimate and 
\begin{equation}
\delta_h \lesssim h^{p+1}
\end{equation}
for the $L^2$ norm estimate. The latter assumption is comparable with 
the geometric approximation accuracy achieved by standard isoparametric finite elements of order 
$p$. 
\end{rem}

\section{Numerical Examples}

In the numerical examples, we use implicitly defined boundaries by use of zero isolines to predefined functions.
Two examples have been considered, one with both convex and concave boundaries, so that cut elements can have parts outside the
actual domain, and one example with nonzero boundary conditions where we also compare setting the boundary condition on the exact boundary to setting
 them on computational
boundary. In all examples the stabilization parameters were set to $\gamma_j=1/10$, $\beta=100$. 

\subsection{Convex and Concave Boundaries}

In our first exampe we consider a ring-shaped domain. In Fig. \ref{fig:domain1} we show the zero isoline of the function
$\phi= (R-1/4)(R-3/4)$, $R=\sqrt{x^2+y^2}$, used to implicitly define the domain,
and the resulting mesh after removing the cut part.  
On this ring, we used a load corresponding to the exact solution being  a square function in $R$,
\begin{equation}
u=20(3/4 - R)(R-1/4)\label{exactsol}
\end{equation}
 with zero boundary conditions
on the outside as well as inside boundaries. The elements on the inside of the ring are partially outside the computational domain;
outside the domain the load was extended by zero and the exact solution (in the convergence study) by (\ref{exactsol}).

We show an elevation of the approximate solution on one of the meshes in a sequence in Fig. \ref{fig:elev1}. In Fig. \ref{fig:conv1p2} and \ref{fig:conv1p3} we show the convergence rates obtained using the symmetric method (\ref{sym1})--(\ref{sym2})
for $P^2$ and $P^3$ elements (polynomial orders $p=2$ and $p=3$), respectively. We also show the suboptimal convergence rates of the original Nitsche method. Note in particular that the optimal rate is attained also for $p=3$ even though only the first two terms in the Taylor series are accounted for.

\subsection{Nonzero Boundary Conditions}

The domain for the second exampe lies inside the ellipse defined by the zero isoline to $\phi=x^2/(3/4)^2+y^2/(1/2)^2-1$.
In Fig. \ref{fig:domain2} we show the zero isoline of this function
and the resulting mesh after removing the cut part.  
On this domain we use the right-hand side 
\[
f=\pi^2\cos{(\pi x/2)}\cos{(\pi y/2)}
\]
corresponding to the exact solution $u=\cos{(\pi x/2)}\cos{(\pi y/2)}$. This function also defines the boundary conditions
on the cut boundary. An elevation of an approximate solution on one of the meshes in a sequence is given in Fig. \ref{fig:elev2}.

In Fig. \ref{fig:convell} we show the observed $L_2$ convergence with a $P^3$ approximation using four different approaches:
\begin{itemize}
\item The symmetric method (\ref{sym1})--(\ref{sym2}).
\item The unsymmetric Taylor expansion with two terms.
\item The unsymmetric Taylor expansion with three terms.
\item Prescribing the boundary condition on the cut boundary (using the fact that the exact solution is known).
\end{itemize}
In all cases the rate of convergence is 4, which is optimal. The error constant is slightly better if we prescribe
the boundary condition on the cut boundary, which is to be expected since this does not introduce any approximations
of the boundary condition. The difference between the other three methods is negligible.

\section*{Appendix: Verification of Some Estimates}

\paragraph{Estimate (\ref{eq:technicalA}).} 

For each $x \in \partial \Omega_h$ we have the representation 
\begin{equation}\label{appendix:a}
\phi(x)= \phi(p(x)) 
+ \int_0^1 \nabla \phi (sx+ (1-s) p(x)) \cdot (x - p(x)) \, ds
\end{equation}
Using the Cauchy-Schwarz inequality we obtain
\begin{align}
|\phi(x)|^2 &\lesssim |\phi(p(x))|^2 + \delta_h \| n \cdot \nabla \phi \|_{I_x}^2
\\ \label{appendix:b}
&\lesssim |\phi(p(x))|^2 + \delta_h \| n \cdot \nabla \phi \|_{I_{\delta_h}(p(x))}^2
\end{align}
where $I_x$ is the line segment between $x$ and $p(x)$ and 
$I_{\delta_h}(p(x))$ is the line segment between the points
$p(x)\pm \delta_h n(p(x))$. 

Integrating (\ref{appendix:b}) over $\partial \Omega_h$ we obtain 
\begin{align}
\| \phi \|^2_{\partial \Omega_h} 
&\lesssim 
\int_{\partial \Omega_h} |\phi \circ p(x)|^2 dx 
+ \int_{\partial \Omega_h} \delta_h \|n\cdot \nabla \phi \|^2_{I_{\delta_h}(p(x))} 
dx
\\
&\lesssim 
\int_{\partial \Omega} |\phi(y) |^2 dy 
+ \int_{\partial \Omega} \delta_h \|n\cdot \nabla \phi \|^2_{I_{\delta_h}(y)} dy
\\
&\lesssim 
\|\phi \|^2_{\partial \Omega} 
+ \delta_h \| n \cdot \nabla \phi \|^2_{U_{\delta_h}(\partial \Omega)}
\end{align}
where we first changed the domain of integration from $\partial \Omega_h$ 
to $\partial \Omega$ and then from the tubular coordinates to 
the Euclidian coordinates. Next integrating (\ref{appendix:b}) over $I_{\delta_h}(y)$, 
with 
$y=p(x)\in \partial \Omega $, we obtain 
\begin{equation}\label{appendix:d}
\| \phi \|^2_{I_{\delta_h(y)}} 
\lesssim \delta_h | \phi(y) |^2 
+ \delta_h^2 \| n \cdot \nabla \phi \|^2_{I_{\delta_h(y)}}
\end{equation}
Again using appropriate changes of coordinates we obtain
\begin{align}
\| \phi \|^2_{U_{\delta_h}(\partial \Omega)} 
&\lesssim 
\int_{\partial \Omega} \| \phi \|^2_{I_{\delta_h}(y)} dy
\\
&\lesssim 
 \int_{\partial \Omega} \delta_h | \phi(y) |^2 dy
+ \int_{\partial \Omega}  \delta_h^2 \| n \cdot \nabla \phi \|^2_{I_{\delta_h(y)}} dy
\\
&\lesssim \label{appendix:e}
\delta_h \| \phi \|^2_{\partial \Omega} 
+ 
\delta_h^2 \|n \cdot \nabla \phi \|^2_{U_{\delta_h}(\partial \Omega)}
\end{align}

\paragraph{Acknowledgement.} This research was supported in part by EPSRC, UK, Grant No. EP/J002313/1, the Swedish Foundation for Strategic Research Grant No.\ AM13-0029, the Swedish Research Council Grants Nos.\ 2011-4992, 
2013-4708, and Swedish strategic research programme eSSENCE.

\bibliographystyle{plain}
\bibliography{BDT}

\newpage
\begin{figure}[ht]
\begin{center}
\includegraphics[scale=0.5]{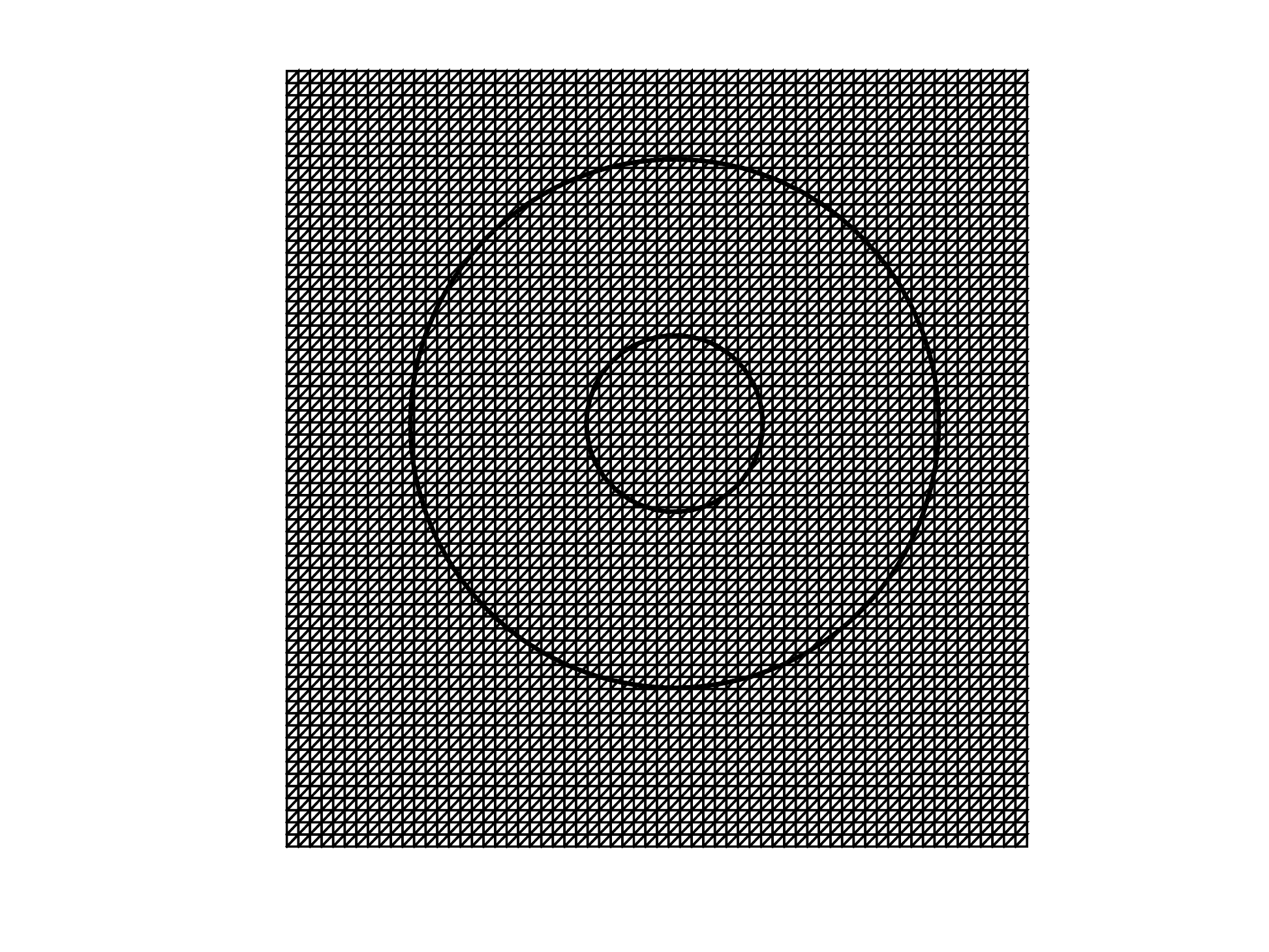}\end{center}
\begin{center}\includegraphics[scale=0.4]{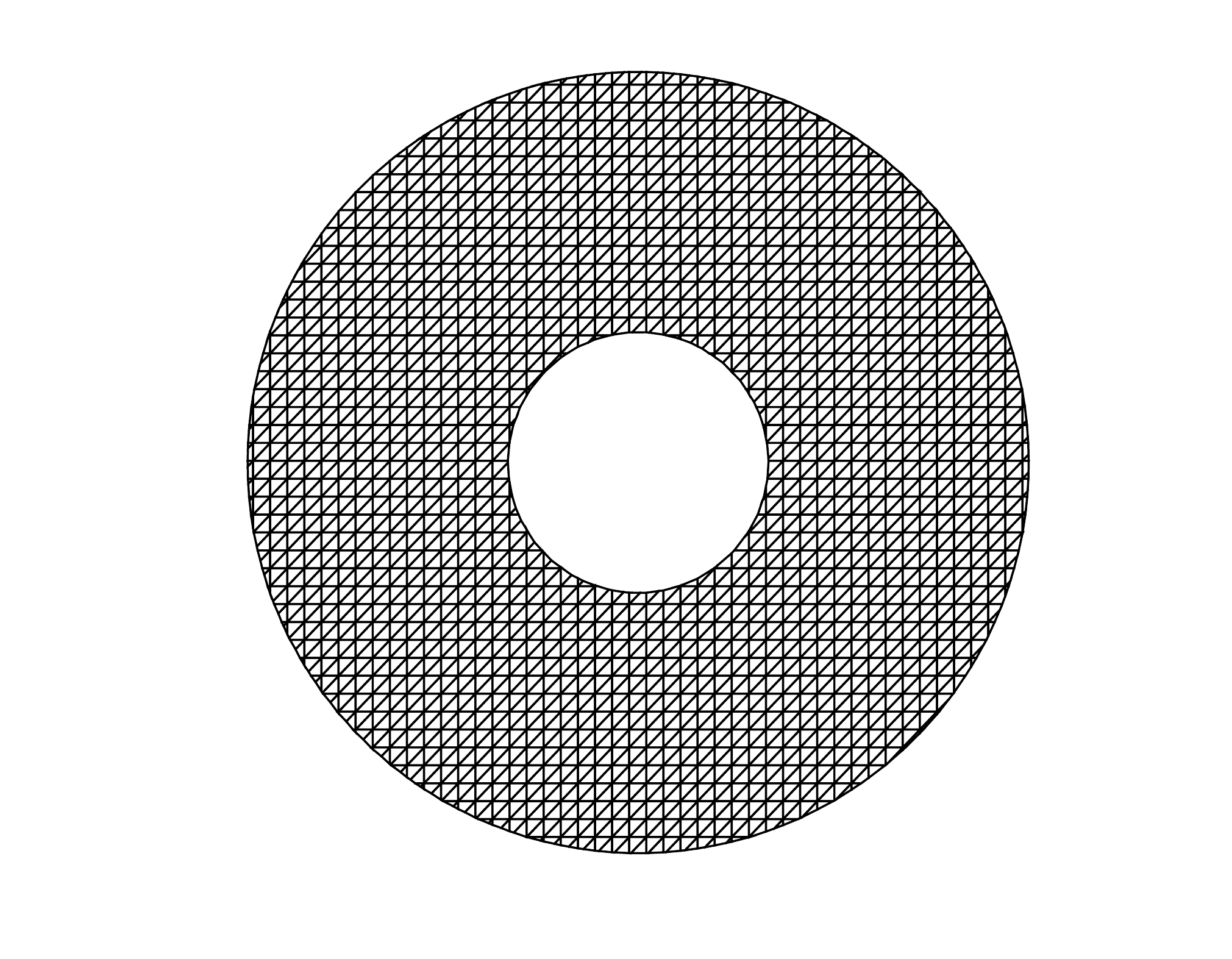}
\end{center}
\caption{Background mesh with the boundary of $\Omega$ indicated, and the corresponding computational mesh.} \label{fig:domain1}
\end{figure}
\begin{figure}[ht]
\begin{center}
\includegraphics[scale=0.25]{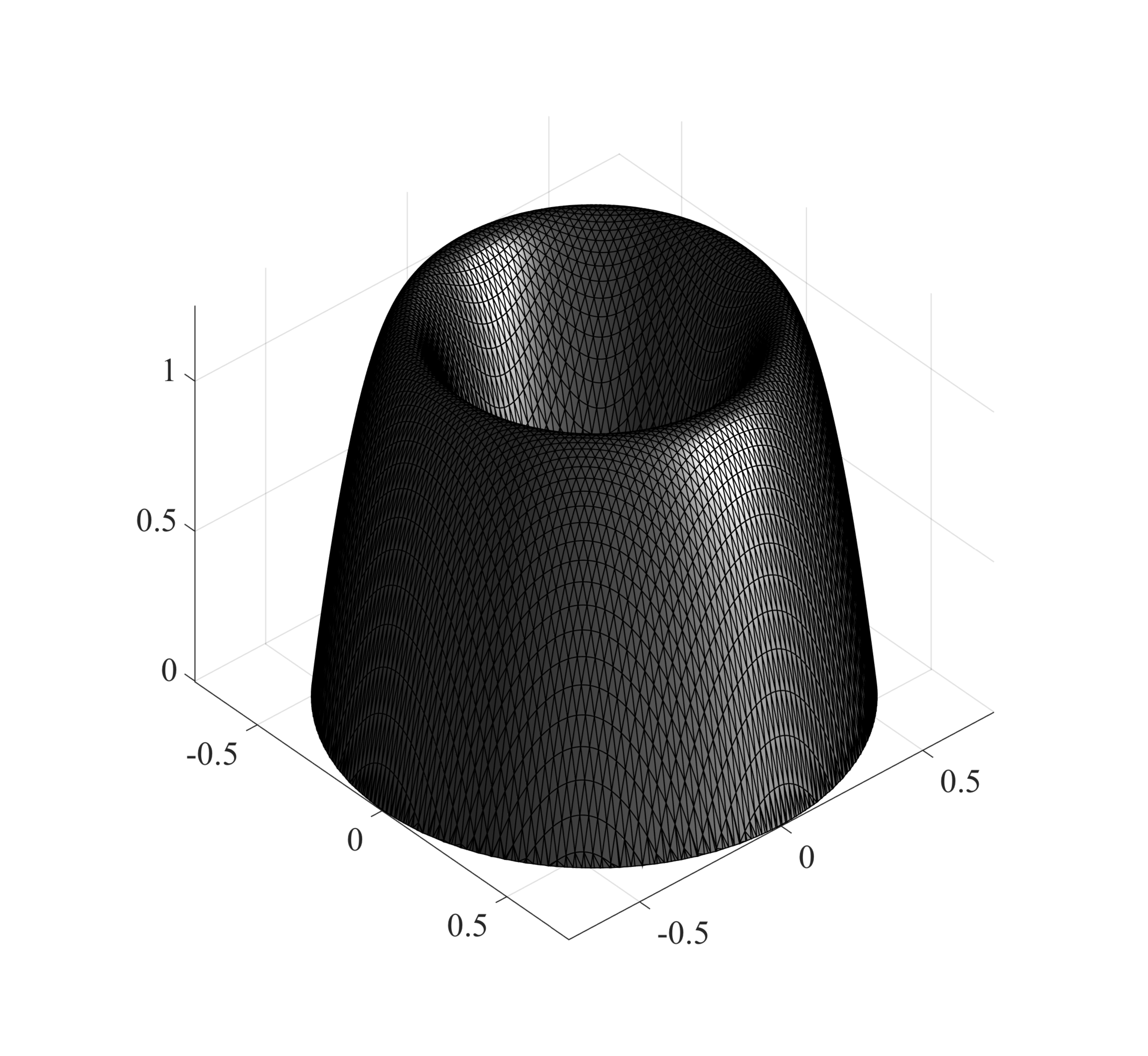}
\end{center}
\caption{Elevation of the approximate solution on one of the meshes in a sequence.} \label{fig:elev1}
\end{figure}
\begin{figure}[ht]
\begin{center}
\includegraphics[scale=0.25]{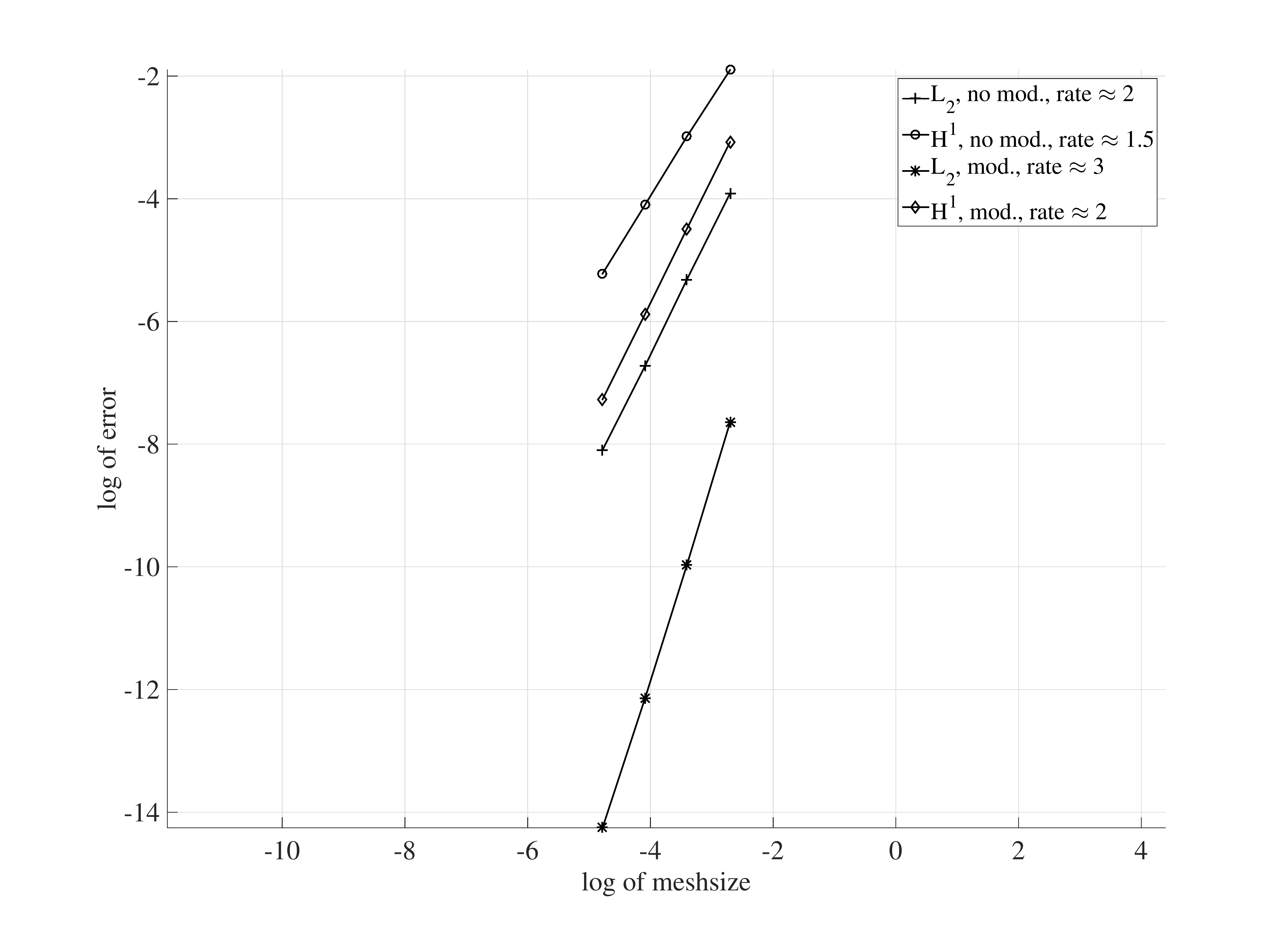}
\end{center}
\caption{Convergence using $P^2$ elements, symmetric form.} \label{fig:conv1p2}
\end{figure}
\begin{figure}[ht]
\begin{center}
\includegraphics[scale=0.25]{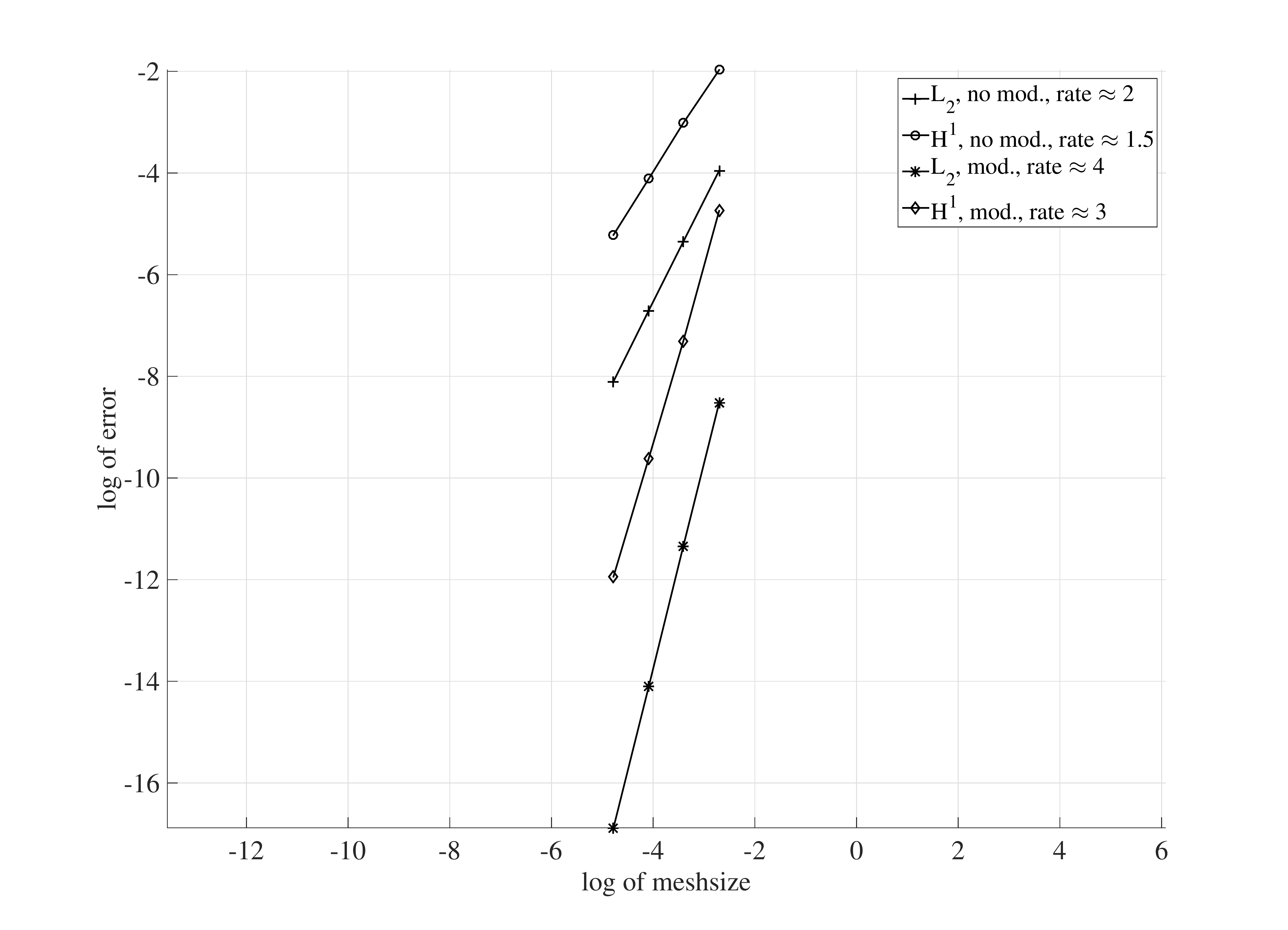}
\end{center}
\caption{Convergence using $P^3$ elements, symmetric form.} \label{fig:conv1p3}
\end{figure}

\begin{figure}[ht]
\begin{center}
\includegraphics[scale=0.5]{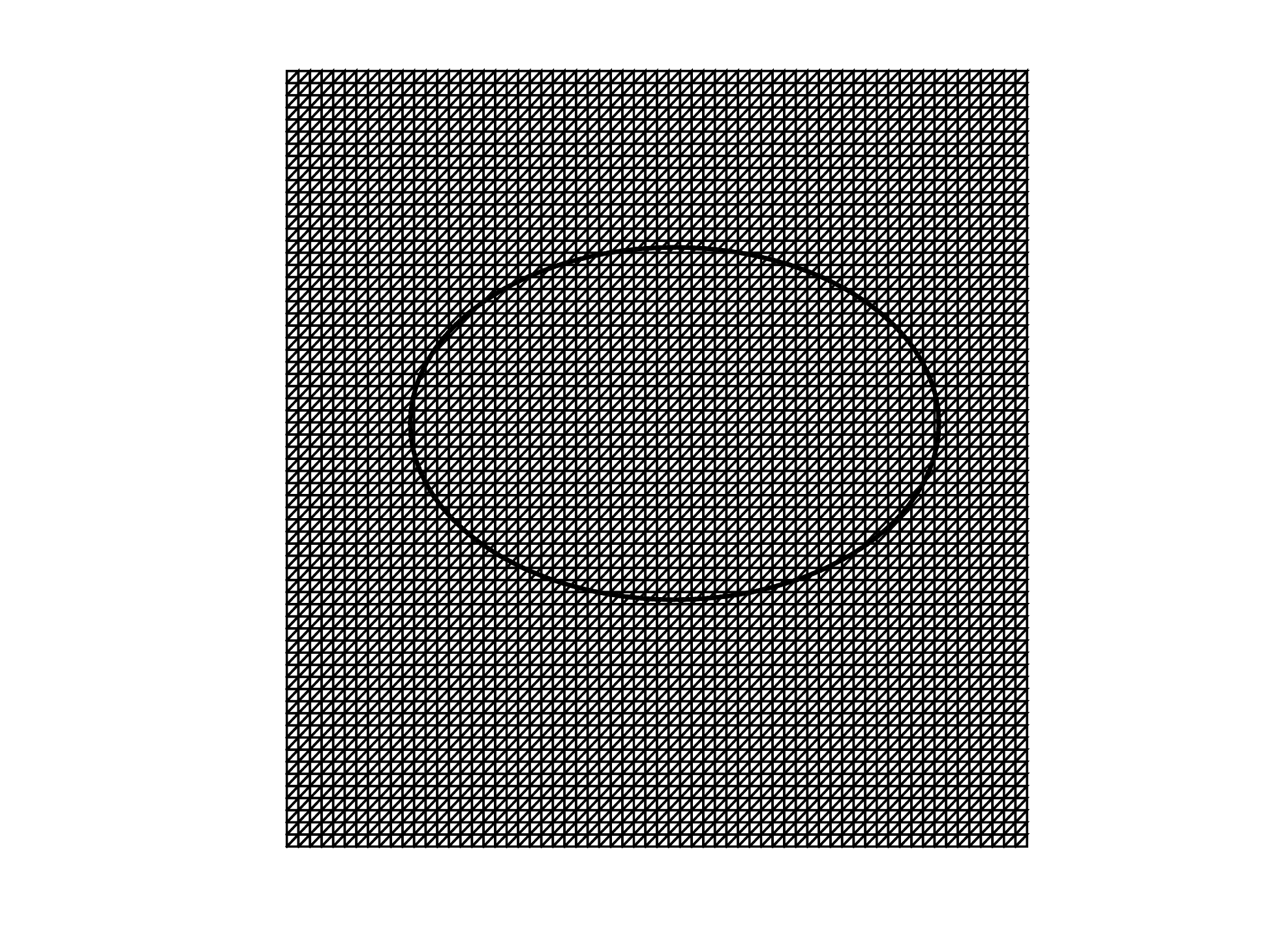}\end{center}
\begin{center}\includegraphics[scale=0.4]{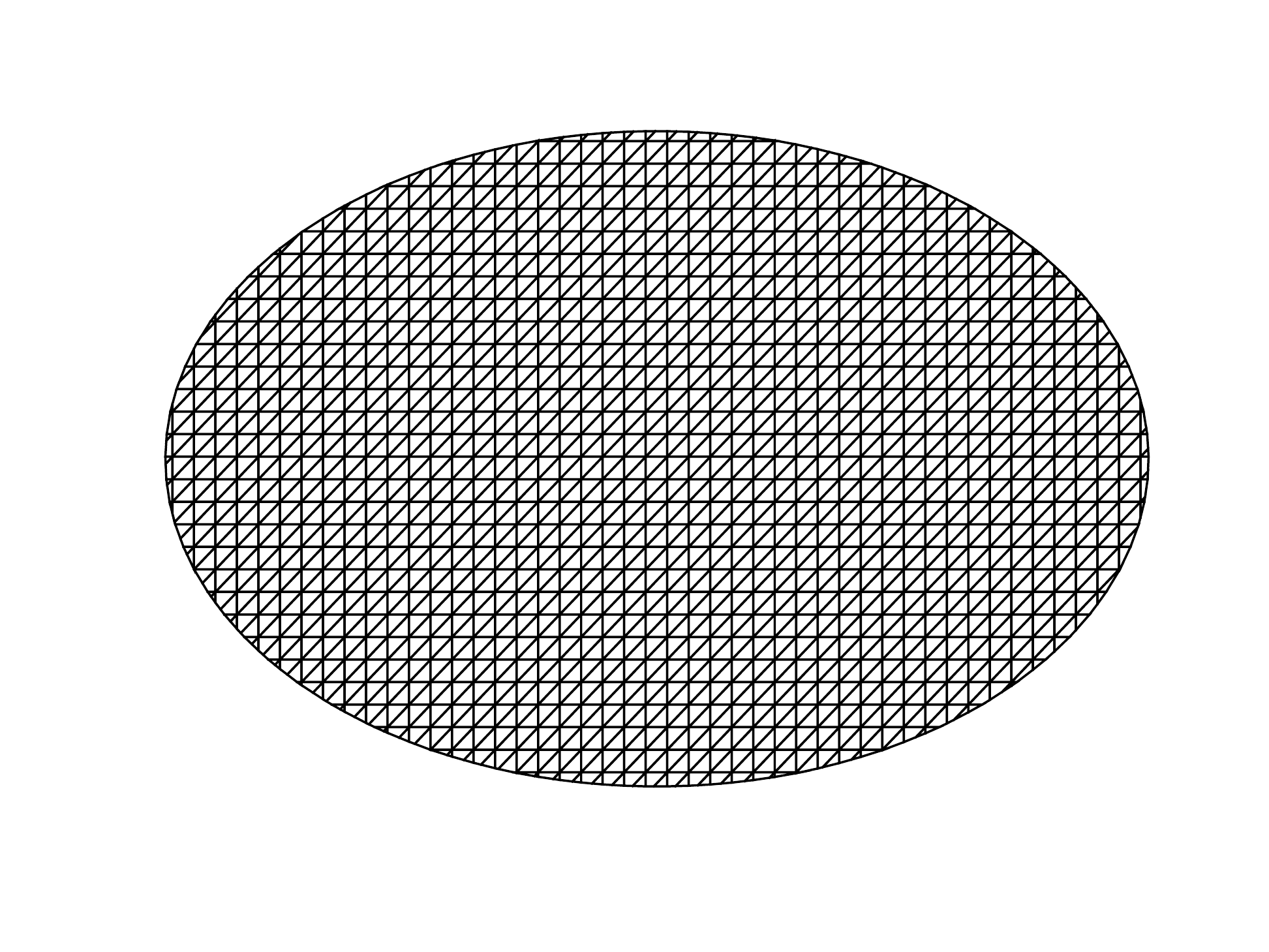}
\end{center}
\caption{Background mesh with the boundary of $\Omega$ indicated, and the corresponding computational mesh.} \label{fig:domain2}
\end{figure}

\begin{figure}[ht]
\begin{center}
\includegraphics[scale=0.25]{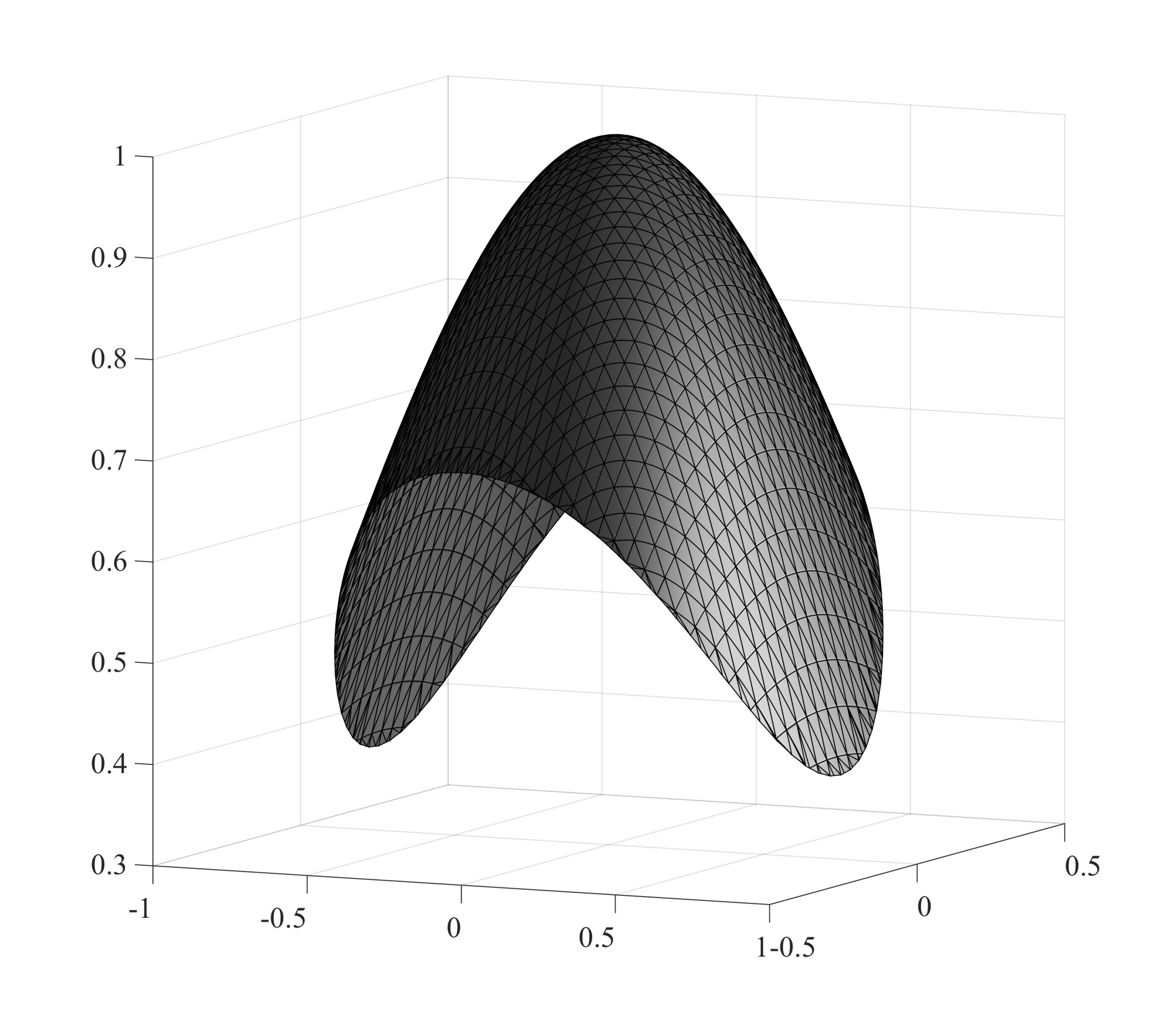}
\end{center}
\caption{Elevation of the approximate solution on one of the meshes in a sequence.} \label{fig:elev2}
\end{figure}

\begin{figure}[ht]
\begin{center}
\includegraphics[scale=0.25]{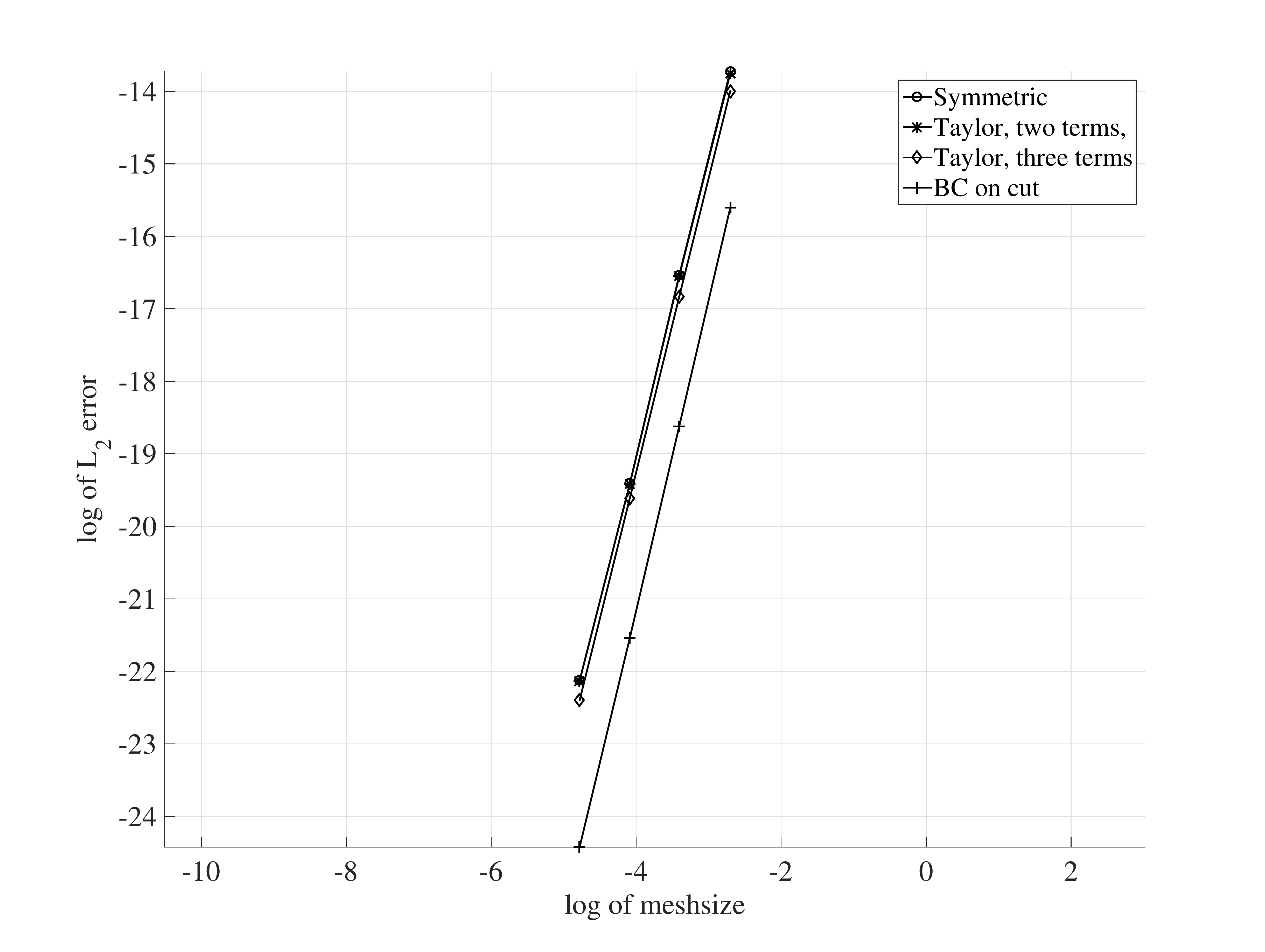}
\end{center}
\caption{Convergence in $L_2$ for four different approaches.} \label{fig:convell}
\end{figure}

\end{document}